\documentclass[12pt]{article}
\input{style}
\usepackage{enumitem}

\begin{document}

\title{Density of the set of probability measures with the martingale
  representation property}

\author{Dmitry Kramkov\footnote{Carnegie Mellon University, Department
    of Mathematical Sciences, 5000 Forbes Avenue, Pittsburgh, PA,
    15213-3890, US. The author also holds a part-time position at the
    University of Oxford. {\it Email: }kramkov@cmu.edu } \and Sergio
  Pulido\footnote{Laboratoire de Math\'ematiques et Mod\'elisation
    d'\'Evry (LaMME), Universit\'e d'\'Evry-Val-d'Essonne, ENSIIE,
    Universit\'e Paris-Saclay, UMR CNRS 8071, IBGBI 23 Boulevard de
    France, 91037 \'Evry Cedex, France. {\it Email:
    }sergio.pulidonino@ensiie.fr. The author's research benefited from
    the support of the Chair Markets in Transition (F\'ed\'eration
    Bancaire Fran\c caise) and the project ANR 11-LABX-0019.}}

\date{\today}

\maketitle

\begin{abstract}
  Let $\psi$ be a multi-dimensional random variable. We show that the
  set of probability measures $\mathbb{Q}$ such that the
  $\mathbb{Q}$-martingale $S^{\mathbb{Q}}_t=\cE{\mathbb{Q}}{t}{\psi}$
  has the Martingale Representation Property (MRP) is either empty or
  dense in $\mathcal{L}_\infty$-norm. The proof is based on a related
  result involving analytic fields of terminal conditions
  $(\psi(x))_{x\in U}$ and probability measures
  $(\mathbb{Q}(x))_{x\in U}$ over an open set $U$.  Namely, we show
  that the set of points $x\in U$ such that
  $S_t(x) = \cE{\mathbb{Q}(x)}{t}{\psi(x)}$ does not have the MRP,
  either coincides with $U$ or has Lebesgue measure zero. Our study is
  motivated by the problem of endogenous completeness in financial
  economics.
\end{abstract}

\begin{description}
\item[Keywords:] martingale representation property, martingales,
  stochastic integrals, analytic fields, endogenous completeness,
  complete market, equilibrium.
\item[AMS Subject Classification (2010):] 60G44, 60H05, 91B51, 91G99.
  % \item[JEL Classification:]
\end{description}

\section{Introduction}
\label{sec:introduction}

Let $(\Omega, \mathcal{F}, (\mathcal{F}_t), \mathbb{P})$ be a filtered
probability space, $\mathbb{Q}$ be an equivalent probability measure,
and $S = (S^i_t)$ be a multi-dimensional martingale under
$\mathbb{Q}$. It is often important to know whether $S$ has the
Martingale Representation Property (MRP), that is, whether every local
martingale under $\mathbb{Q}$ is a stochastic integral with respect to
$S$. For instance, in mathematical finance such MRP corresponds to the
\emph{completeness} of the market with stock prices $S$. By Jacod's
theorem, $S$ has the MRP if and only if $\mathbb{Q}$ is its only
equivalent martingale measure.

In many applications, $S$ is defined in a \emph{forward form}, as a
solution of an SDE, and the verification of the MRP is quite
straightforward. Suppose, for example, that $S$ is a $d$-dimensional
It\^o process such that
\begin{displaymath}
  dS_t = \sigma_t (\alpha_tdt + dB_t), 
\end{displaymath}
where $B$ is a $d$-dimensional Brownian motion, $\alpha =(\alpha_t)$
is a $d$-dimensional market price of risk process and
$\sigma = (\sigma_t)$ is a $d\times d$-dimensional volatility
process. Let us assume that the local martingale
\begin{displaymath}
  Z_t = \exp\left(-\int_0^t \alpha_s
    dB_s - \frac12 \int_0^t \abs{\alpha_s}^2 ds\right), \quad t\geq 0,  
\end{displaymath} 
is uniformly integrable; this fact can usually be verified by
Novikov's or Kazamaki's conditions.  By Girsanov's theorem, $Z$ is the
density process of an equivalent martingale measure $\mathbb{Q}$ for
$S$.  If the filtration is generated by $B$, then $S$ has the MRP
(equivalently, $\mathbb{Q}$ is its only equivalent martingale measure)
if and only if the matrix-valued volatility process
$\sigma=(\sigma_t)$ has full rank $d\mathbb{d\mathbb{P}} \times dt$
almost surely.

We are interested in the situation where both $S$ and $Z$ are
described in a \emph{backward form} through their terminal values:
\begin{equation}
  \label{eq:1}  
  \begin{split}
    Z_\infty &= \frac{d\mathbb{Q}}{d\mathbb{P}} =
    \frac{\zeta}{\mathbb{E}[\zeta]}, \\
    S_t &= \mathbb{E}^{\mathbb{Q}}[\psi|\mathcal{F}_t], \quad t\geq 0,
  \end{split}
\end{equation}
where $\zeta>0$ and $\psi = (\psi^i)$ are \emph{given} random
variables. Such setup naturally arises in the problem of
\emph{endogenous completeness} of financial economics, where the
random variable $\psi$ represents the terminal values of the traded
securities and $\mathbb{Q}$ defines an equilibrium pricing
measure. The term ``endogenous'' indicates that the stock prices
$S=(S^i)$ are \emph{computed} by~\eqref{eq:1} as part of the
solution. The examples include the construction of Radner
equilibrium~\cite{AnderRaim:08, HugMalTrub:12, RiedHerz:13, Kram:15}
and the verification of the completeness property for a market with
options~\cite{DavisObjoj:08, Schwarz:17}.

The main focus of the existing literature has been on the case when
the random variables $\zeta$ and $\psi$ are defined in terms of a
Markov diffusion in a form consistent with Feynman-Kac formula. The
proofs have relied on PDE methods and, in particular, on the theory of
analytic semigroups~\cite{KramPred:14}. A key role has been played by
the assumption that time-dependencies are analytic.

In this paper we do not impose any conditions on the form of the
random variables $\zeta$ and $\psi$. Our main results are stated as
Theorems~\ref{th:1} and~\ref{th:2}. In Theorem~\ref{th:1} we show that
the set
\begin{displaymath}
  \mathcal{Q}(\psi) \set \descr{\mathbb{Q}\sim
    \mathbb{P}}{S_t^{\mathbb{Q}}\set 
    \cE{\mathbb{Q}}{t}{\psi} \text{ has the MRP}}
\end{displaymath}
is either empty or $\mathcal{L}_{\infty}$-dense in the set of all
equivalent probability measures. In Theorem~\ref{th:2} we consider
analytic fields of probability measures $(\mathbb{Q}(x))_{x\in U}$ and
terminal conditions $(\psi(x))_{x\in U}$ over an open set $U$. We
prove that the exception set
\begin{displaymath}
  I \set \descr{x\in U}{S_t(x)\set\cE{\mathbb{Q}(x)}{t}{\psi(x)}
    \text{ does not have the MRP}}  
\end{displaymath}
either coincides with $U$ or has Lebesgue measure zero.

We expect the results of this paper to be useful in problems of
financial economics involving the endogenous completeness
property. One such application, to the problem of optimal investment
under price impact, is discussed in Remark~\ref{rem:2}.

\section{Density of the set of probability measures with the MRP}
\label{sec:main-result}

We work on a filtered probability space
$(\Omega, \mathcal{F}, (\mathcal{F}_t)_{t\geq 0}, \mathbb{P})$
satisfying the usual conditions of completeness and right-continuity;
the initial $\sigma$-algebra $\mathcal{F}_0$ is trivial and
$\mathcal{F} =\mathcal{F}_\infty$.  We denote by
$\mathcal{L}_1 = \mathcal{L}_1(\mathbf{R}^d)$ and
$\mathcal{L}_\infty = \mathcal{L}_\infty(\mathbf{R}^d)$ the Banach
spaces of (equivalence classes of) $d$-dimensional random variables
$\xi$ with the norms $\sNorm{\xi}{\mathcal{L}_1} \set \EP{\abs{\xi}}$
and
$\sNorm{\xi}{\mathcal{L}_\infty} \set
\inf\descr{c>0}{\ProbP{\abs{\xi}\leq c} = 1}$.
We use the same notation $\mathcal{L}_1$ for the isometric Banach space of
uniformly integrable martingales $M$ with the norm
$\sNorm{M}{\mathcal{L}_1} \set \sNorm{M_\infty}{\mathcal{L}_1}$.

For a matrix $A = (A^{ij})$ we denote its transpose by $A^*$ and
define its norm as
\begin{displaymath}
  \abs{A} \set  \sqrt{\trace{AA^*}} = \sqrt{\sum_{i,j}\abs{A^{ij}}^2}.
\end{displaymath}
If $X$ is a $m$-dimensional semimartingale and $\gamma$ is a
$m\times n$-dimensional $X$-integrable predictable process, then
$\gamma\cdot X = \int \gamma^* dX$ denotes the $n$-dimensional
stochastic integral of $\gamma$ with respect to $X$.  We recall that a
$n\times k$-dimensional predictable process $\zeta$ is
$(\gamma\cdot X)$-integrable if and only if $\gamma\zeta$ is
$X$-integrable.  In this case,
$\zeta \cdot (\gamma \cdot X) = (\gamma\zeta)\cdot X$ is a
$k$-dimensional semimartingale.

\begin{Definition}
  \label{def:1}
  Let $\mathbb{Q}$ be an equivalent probability measure
  ($\mathbb{Q}\sim \mathbb{P}$) and $S$ be a $d$-dimensional local
  martingale under $\mathbb{Q}$. We say that \emph{$S$ has the
    Martingale Representation Property (MRP)} if every local
  martingale $M$ under $\mathbb{Q}$ is a stochastic integral with
  respect to $S$, that is, there is a predictable $S$-integrable
  process $\gamma$ with values in $\mathbf{R}^d$ such that
  \begin{displaymath}
    M=M_0 + \gamma\cdot S. 
  \end{displaymath}
\end{Definition}

\begin{Remark}
  \label{rem:1}
  Jacod's theorem in~\cite[Section XI.1(a)]{Jacod:79} states that $S$
  has the MRP if and only if there is only one
  $\mathbb{Q}\sim \mathbb{P}$ such that $S$ is a local martingale
  under $\mathbb{Q}$.  Thus, there is no need to mention $\mathbb{Q}$
  in the definition of the MRP.
\end{Remark}

Let $\psi = (\psi^i)_{i=1,\dots,d}$ be a $d$-dimensional random
variable. We denote by $\mathcal{Q}(\psi)$ the family of probability
measures $\mathbb{Q}\sim \mathbb{P}$ such that
$\E{\mathbb{Q}}{\abs{\psi}} < \infty$ and the $\mathbb{Q}$-martingale
\begin{displaymath}
  S^{\mathbb{Q}}_t = \cE{\mathbb{Q}}{t}{\psi}, \quad t\geq 0, 
\end{displaymath}
has the MRP.

This is our first main result.

\begin{Theorem}
  \label{th:1}
  Suppose that $\psi\in\mathcal{L}_1(\mathbf{R}^d)$ and
  $\mathcal{Q}(\psi) \not=\emptyset$. Then for every $\epsilon>0$
  there is $\mathbb{Q}\in \mathcal{Q}(\psi)$ such that
  \begin{displaymath}
    \sNorm{\frac{d\mathbb{Q}}{d\mathbb{P}}-1}{\mathcal{L}_\infty} \leq
    \epsilon. 
  \end{displaymath}
\end{Theorem}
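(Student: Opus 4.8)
The plan is to deduce Theorem~\ref{th:1} from the analytic-field result Theorem~\ref{th:2}, by building a one-parameter analytic field of equivalent probability measures $(\mathbb{Q}(x))_{x\in U}$ that passes through a prescribed $\mathbb{Q}_{0}\in\mathcal{Q}(\psi)$ and degenerates, in $\mathcal{L}_{\infty}$, to $\mathbb{P}$; Theorem~\ref{th:2} then forces the set of parameters $x$ at which $\cE{\mathbb{Q}(x)}{t}{\psi}$ fails the MRP to be Lebesgue-null, and choosing a good parameter near the $\mathbb{P}$-end of the field produces the required measure.

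Fix $\mathbb{Q}_{0}\in\mathcal{Q}(\psi)$ and set $\zeta_{0}=\frac{d\mathbb{Q}_{0}}{d\mathbb{P}}>0$. Suppose first that $\zeta_{0}$ is two-sidedly bounded, say $\zeta_{0}\in[a,b]$ with $0<a\le b<\infty$. Then the affine family $\zeta(x)\set(1-x)\zeta_{0}+x$ consists of strictly positive densities of mean one for $x$ in an open interval $U=(-\delta,1+\delta)$, with $\delta=\delta(a,b)>0$, it depends analytically (indeed affinely) on $x$, and, $\zeta(x)$ being bounded uniformly on compact subsets of $U$, one has $\E{\mathbb{Q}(x)}{\abs{\psi}}<\infty$ throughout, where $\frac{d\mathbb{Q}(x)}{d\mathbb{P}}=\zeta(x)$. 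Together with the constant field $\psi(x)\equiv\psi$, this is an analytic field to which Theorem~\ref{th:2} applies. Since $\mathbb{Q}(0)=\mathbb{Q}_{0}\in\mathcal{Q}(\psi)$, the exception set $I\set\descr{x\in U}{\cE{\mathbb{Q}(x)}{t}{\psi}\text{ does not have the MRP}}$ is not all of $U$, hence $I$ has Lebesgue measure zero. Because $\sNorm{\zeta(x)-1}{\mathcal{L}_{\infty}}=\abs{1-x}\,\sNorm{\zeta_{0}-1}{\mathcal{L}_{\infty}}$, for every $\epsilon>0$ there is $x$ close enough to $1$ with $x\notin I$, and then $\mathbb{Q}\set\mathbb{Q}(x)\in\mathcal{Q}(\psi)$ satisfies $\sNorm{\frac{d\mathbb{Q}}{d\mathbb{P}}-1}{\mathcal{L}_{\infty}}\le\epsilon$.

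The crux, and the step I expect to demand the most care, is the reduction to a two-sidedly bounded density: one must show that $\mathcal{Q}(\psi)\neq\emptyset$ already forces $\mathcal{Q}(\psi)$ to contain a measure whose $\mathbb{P}$-density takes values in a compact subinterval of $(0,\infty)$. Something of this sort is genuinely needed, since the $\mathcal{L}_{\infty}$ conclusion cannot be extracted from naive interpolation: both the affine family above and the exponential family $\zeta_{0}^{x}/\mathbb{E}[\zeta_{0}^{x}]$ are $\mathcal{L}_{\infty}$-bounded only at degenerate endpoints, and when $\zeta_{0}$ has no exponential moments these families are not even analytic fields on a neighbourhood of the relevant endpoint, so $\mathbb{Q}_{0}$ cannot be placed at an interior parameter. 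My approach to the reduction would again route through Theorem~\ref{th:2}: the exponential family $\frac{d\mathbb{Q}(x)}{d\mathbb{P}}=\zeta_{0}^{x}/\mathbb{E}[\zeta_{0}^{x}]$ is a bona fide analytic field over the open interval $(0,1)$ (the function $x\mapsto\mathbb{E}[\zeta_{0}^{x}]$ is finite on $[0,1]$ by H\"older's inequality and analytic on its interior), and if one can show its exception set is not all of $(0,1)$ — here the remaining obstacle is that the only manifestly good point $\mathbb{Q}_{0}$ sits at the boundary $x=1$, so one needs either an a priori improvement of the integrability of $\zeta_{0}$ near $x=1$ or a limiting argument transferring the MRP of $S^{\mathbb{Q}_{0}}$ to parameters just below $1$ — then Theorem~\ref{th:2} gives $\mathbb{Q}(x)\in\mathcal{Q}(\psi)$ for a.e.\ $x\in(0,1)$, each such measure having finite relative entropy with respect to $\mathbb{P}$; a further truncation, possibly a second application of Theorem~\ref{th:2}, should then land one inside $\mathcal{Q}(\psi)$ with a bounded density. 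Granting this reduction, the remainder of the proof is the routine argument sketched above.
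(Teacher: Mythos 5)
Your overall strategy --- run a one-parameter analytic field from $\mathbb{Q}_0$ towards $\mathbb{P}$, invoke Theorem~\ref{th:2} to see that the exception set is Lebesgue-null, and pick a parameter near the $\mathbb{P}$-end outside it --- is exactly the paper's, and your first paragraph (the affine interpolation for a two-sidedly bounded density) is correct. The gap is the one you yourself flag: the reduction to a bounded density is not carried out, and the route you sketch does not close it. The exponential family $\zeta_0^x/\EP{\zeta_0^x}$, even granting that Theorem~\ref{th:2} applies to it, produces densities $\zeta_0^x$ that are unbounded (and not bounded away from $0$) whenever $\zeta_0$ is, so it leaves you facing the same problem you started with; ``finite relative entropy'' and ``a further truncation'' do not bridge this, since it is precisely the question of which truncations preserve membership in $\mathcal{Q}(\psi)$ that is at issue. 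As it stands the argument only proves the theorem under the additional hypothesis that some element of $\mathcal{Q}(\psi)$ has a density valued in a compact subinterval of $(0,\infty)$.

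The paper avoids any reduction by choosing, for $\zeta\set\frac{d\mathbb{R}}{d\mathbb{P}}$ with $\mathbb{R}\in\mathcal{Q}(\psi)$, the single family
\begin{displaymath}
  \zeta(x)\set\frac{1-e^{-x\zeta}}{x}+\frac{x}{1+x},\qquad x>0,
\end{displaymath}
which is \emph{simultaneously} bounded above by $1+\frac1x$ and below by $\frac{x}{1+x}$ for every $x>0$, satisfies $-\frac1{1+x}\leq\zeta(x)-1\leq\frac1x-\frac1{1+x}$ (hence converges to $1$ in $\mathcal{L}_\infty$ as $x\to\infty$), converges to $\zeta$ in $\mathcal{L}_1$ as $x\to0^{+}$, and is analytic on $(0,\infty)$ with values in $\mathcal{L}_\infty$ by Lemma~\ref{lem:1}. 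The boundary-point difficulty you identify as the ``remaining obstacle'' --- that the only manifestly good measure sits at an endpoint of the analyticity interval --- is exactly what the hypotheses of Theorem~\ref{th:2} are engineered to absorb: the theorem only requires $x_0\in\closure U$, continuity of $x\mapsto\zeta(x)$ and $x\mapsto\xi(x)$ on $U\cup\braces{x_0}$ in $\mathcal{L}_1$, and analyticity on $U$ itself, with the MRP assumed only at $x_0$. Applying it with $U=(0,\infty)$ and $x_0=0$ (where $\mathbb{Q}(0)=\mathbb{R}$) yields an at most countable exception set, and any large $x$ outside it finishes the proof. So no ``limiting argument transferring the MRP to parameters just below the endpoint'' needs to be invented --- it is already packaged inside Theorem~\ref{th:2} --- but your proposal, read on its own terms, leaves the theorem unproved for unbounded densities.
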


The proof is based on Theorem~\ref{th:2} from
Section~\ref{sec:analyticity-mr} and on the following elementary
lemma. We recall the definition of an analytic function with values in
a Banach space at the beginning of Section~\ref{sec:analyticity-mr}.

\begin{Lemma}
  \label{lem:1}
  Let $\zeta$ be a nonnegative random variable. Then the map
  $x\mapsto e^{-x\zeta}$ from $(0,\infty)$ to $\mathcal{L}_\infty$ is
  analytic.
\end{Lemma}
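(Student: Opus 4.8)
The plan is to exhibit, for each fixed $x_0 \in (0,\infty)$, a power series in $(x-x_0)$ with coefficients in $\mathcal{L}_\infty$ that converges to $x \mapsto e^{-x\zeta}$ in the $\mathcal{L}_\infty$-norm on a neighbourhood of $x_0$; this is precisely the definition of analyticity that the paper recalls at the start of Section~\ref{sec:analyticity-mr}. The natural candidate is the formal Taylor series
\begin{displaymath}
  e^{-x\zeta} = \sum_{n=0}^\infty \frac{(-\zeta)^n e^{-x_0\zeta}}{n!}\,(x-x_0)^n,
\end{displaymath}
obtained by differentiating $x\mapsto e^{-x\zeta}$ pointwise in $\omega$. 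So the first step is to write down this series and identify the $n$-th coefficient as the random variable $c_n \set \frac{(-\zeta)^n e^{-x_0\zeta}}{n!}$.

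The second step is to bound $\sNorm{c_n}{\mathcal{L}_\infty}$. Pointwise in $\omega$ one has $\zeta^n e^{-x_0\zeta} \leq \sup_{t\geq 0} t^n e^{-x_0 t} = (n/x_0)^n e^{-n} = \left(\frac{n}{e x_0}\right)^n$, using elementary calculus (the maximum of $t\mapsto t^n e^{-x_0 t}$ is at $t = n/x_0$). Hence $\sNorm{c_n}{\mathcal{L}_\infty} \leq \frac{1}{n!}\left(\frac{n}{e x_0}\right)^n$, and by Stirling's formula $n! \geq (n/e)^n$, so in fact $\sNorm{c_n}{\mathcal{L}_\infty} \leq x_0^{-n}$. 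Therefore $\sum_{n\geq 0}\sNorm{c_n}{\mathcal{L}_\infty}\,\abs{x-x_0}^n$ converges whenever $\abs{x-x_0} < x_0$, so the series converges absolutely in the Banach space $\mathcal{L}_\infty$ on the ball of radius $x_0$ around $x_0$.

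The third step is to check that the $\mathcal{L}_\infty$-limit of the series actually equals the function $e^{-x\zeta}$. Since $\mathcal{L}_\infty$-convergence implies convergence $\mathbb{P}$-a.s.\ along a subsequence (indeed, here the tail of the series is dominated in $\mathcal{L}_\infty$, hence uniformly in $\omega$ off a null set), and for each fixed $\omega$ the numerical Taylor series of $x\mapsto e^{-x\zeta(\omega)}$ converges to $e^{-x\zeta(\omega)}$ on all of $\mathbf{R}$, the two limits agree a.s. This identifies the sum of the series with $e^{-x\zeta}$ as an element of $\mathcal{L}_\infty$, completing the proof.

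I do not anticipate a serious obstacle here; the only mild subtlety is that one must phrase everything in terms of the $\mathcal{L}_\infty$-norm (uniform bounds in $\omega$) rather than pointwise convergence, so that the argument genuinely establishes Banach-space analyticity and not merely a.s.\ analyticity of the sample functions — but the uniform bound $\zeta^n e^{-x_0\zeta}\leq (n/(ex_0))^n$ takes care of this cleanly, since it is a bound that does not depend on $\zeta(\omega)$.
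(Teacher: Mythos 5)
Your proof is correct and follows essentially the same route as the paper: the pointwise Taylor expansion with coefficients $\frac{1}{n!}(-\zeta)^n e^{-x_0\zeta}$, the elementary bound $\zeta^n e^{-x_0\zeta}\leq (n/(ex_0))^n$, and a Stirling-type estimate to get geometric decay of the coefficient norms, giving convergence in $\mathcal{L}_\infty$ on $\abs{x-x_0}<x_0$. Your third step (identifying the $\mathcal{L}_\infty$-sum with $e^{-x\zeta}$ via the pointwise scalar Taylor series) is a harmless elaboration of what the paper leaves implicit.
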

\begin{proof}
  Fix $y>0$. For every $\omega\in \Omega$ the function
  $x\mapsto e^{-x\zeta(\omega)}$ has a Taylor's expansion
  \begin{equation}
    \label{eq:2}
    e^{-x\zeta(\omega)} = \sum_{n=0}^\infty A_n(y)(\omega) (x-y)^n,
    \quad x\in \mathbf{R},
  \end{equation}
  where
  \begin{displaymath}
    A_n(y) = \frac1{n!} \frac{d^n}{dx^n} \left(e^{-x\zeta}\right)|_{x=y} =
    \frac1{n!} (-1)^n \zeta^n e^{-y\zeta}. 
  \end{displaymath}
  We deduce that
  \begin{displaymath}
    \norm{A_n(y)}_{\mathcal{L}_{\infty}} \leq \frac1{n!} \max_{t\geq 0}
    (t^n e^{-yt}) = \frac1{n!} \left(\frac{n}{ey}\right)^n  \leq K
    \frac1{\sqrt{n}} \left(\frac1{y}\right)^n,
  \end{displaymath}
  where the existence of a constant $K>0$ follows from Stirling's
  formula:
  \begin{displaymath}
    \lim_{n\to \infty} \frac{\sqrt{2\pi n}}{n!} \left(\frac{n}{e}\right)^n = 1. 
  \end{displaymath}
  It follows that the series in~\eqref{eq:2} converges in
  $\mathcal{L}_\infty$ provided that $\abs{x-y}<y$.
\end{proof}

\begin{proof}[Proof of Theorem~\ref{th:1}]
  We take $\mathbb{R}\in \mathcal{Q}(\psi)$, denote
  $\zeta \set \frac{d\mathbb{R}}{d\mathbb{P}}$, and for $x>0$ define
  the random variables
  \begin{align*}
    \zeta(x) &\set \frac{1-e^{-x\zeta}}{x} + \frac{x}{1+x}, \\
    \xi(x) &\set \zeta(x) \psi,
  \end{align*}
  and a probability measure $\mathbb{Q}(x)$ such that
  \begin{displaymath}
    \frac{d\mathbb{Q}(x)}{d\mathbb{P}} = \frac{\zeta(x)}{\EP{\zeta(x)}}. 
  \end{displaymath}

  We set $\zeta(0) \set \zeta$, $\xi(0) \set \zeta\psi$, and
  $\mathbb{Q}(0)\set \mathbb{R}$ and observe that for every
  $\omega \in \Omega$ the functions $x\mapsto \zeta(x)(\omega)$ and
  $x\mapsto \xi(x)(\omega)$ on $[0,\infty)$ are continuous.  Since
  \begin{displaymath}
    \abs{\zeta(x)} \leq \zeta \sup_{t\geq 0} \frac{1-e^{-t}}{t} +
    \frac{x}{1+x} \leq \zeta + 1, 
  \end{displaymath}
  the dominated convergence theorem yields that $x\mapsto \zeta(x)$
  and $x\mapsto \xi(x)$ are continuous maps from $[0,\infty)$ to
  $\mathcal{L}_1$.  By Lemma~\ref{lem:1}, $x\mapsto \zeta(x)$ is an
  analytic map from $(0, \infty)$ to $ \mathcal{L}_\infty$ and thus
  $x\mapsto \zeta(x)$ and $x\mapsto \xi(x)$ are analytic maps from
  $(0,\infty)$ to $\mathcal{L}_1$.  Theorem~\ref{th:2} then implies
  that the exception set
  \begin{displaymath}
    I \set \descr{x>0}{\mathbb{Q}(x)\not\in \mathcal{Q}(\psi)}
  \end{displaymath}
  is at most countable.

  Choose now any $\epsilon>0$. Since
  \begin{displaymath}
    -\frac1{1+x} \leq \zeta(x) -1 \leq \frac1x - \frac1{1+x},
  \end{displaymath}
  there is $x_0 = x_0(\epsilon)$ such that the assertion of the
  theorem holds for every $\mathbb{Q}(x)$ with $x\geq x_0$ and
  $x\not \in I$.
\end{proof}

\begin{Remark}
  \label{rem:2}
  Theorem~\ref{th:1} plays a key role in our work, in progress, on the
  problem of optimal investment in a ``backward'' model of price
  impact~\cite{Germ:11,KramkovPulido:16}.  There are a large investor
  with utility function $U=U(x)$ and initial capital $X_0$ and
  a market maker with exponential utility function
  \begin{displaymath}
    V(y) = \frac1a\left(1-e^{-ay}\right), \quad y\in \mathbf{R}, 
  \end{displaymath}
  where $a>0$ is the absolute risk-aversion coefficient.  The investor
  looks for a predictable process $\gamma = (\gamma_t)$ of the numbers
  of stocks that maximizes the expected utility:
  \begin{displaymath}
    u(X_0) = \sup_{\gamma} \EP{U(X_0 + \gamma \cdot S(\gamma)_T)}.
  \end{displaymath}
  While the terminal stock prices are fixed to random dividends
  $\psi$:
  \begin{displaymath}
    S_T(\gamma) = \psi, 
  \end{displaymath}
  their intermediate values are set so that the opposite position to
  the demand $\gamma$ is optimal for the market maker:
  \begin{displaymath}
    -\gamma = \argmax_{\zeta}  \EP{V(\zeta \cdot S(\gamma)_T)}.
  \end{displaymath}
  The standard first-order conditions in optimal investment lead to
  the expression for prices $S(\gamma)$ in a \emph{backward form}:
  \begin{displaymath}
    S_t(\gamma)=\cE{\mathbb{Q(\gamma)}}{t}{\psi},
  \end{displaymath}
  where
  \begin{displaymath}
    \frac{d\mathbb{Q(\gamma)}}{d\mathbb{P}}
    =\frac{V'(-\gamma \cdot S(\gamma)_T)}{\EP{V'(-\gamma\cdot
        S(\gamma)_T)}} = 
    \frac{\exp \left(a \gamma \cdot
        S(\gamma)_T\right)}{\EP{\exp\left(a \gamma \cdot 
          S(\gamma)_T\right)}}.    
  \end{displaymath}

  Theorem~\ref{th:1} allows us to relax this apparently complex
  stochastic control problem into a simple \emph{static} framework.
  More precisely, we show that
  \begin{displaymath}
    u(X_0) = \max_{\xi \in \mathcal{C}} \EP{U(X_0 + \xi)}, 
  \end{displaymath}
  where $\mathcal{C}$ is the family of random variables given by
  \begin{displaymath}
    \mathcal{C} \set \descr{\xi}{\EP{\xi V'(-\xi)}=0} =
    \descr{\xi}{\EP{\xi e^{a\xi}}=0}.
  \end{displaymath}
  The main ingredient of the proof is the assertion that the family of
  terminal gains of trading strategies
  \begin{displaymath}
    \mathcal{D} \set \descr{\xi}{\xi = \gamma \cdot S(\gamma)_T \text{
        for some demand $\gamma$}} 
  \end{displaymath}
  is $\mathcal{L}_\infty$-dense in $\mathcal{C}$, which can be
  interpreted as an {\it approximate completeness} of the model.  This
  claim follows from Theorem~\ref{th:1}, after we observe that a
  random variable $\xi\in \mathcal{C}$ also belongs to
  $\mathcal{D}$ if the $\mathbb{Q}(\xi)$-martingale $S(\xi)$ has the
  MRP, where
  \begin{align*}
    S_t(\xi) &= \cE{\mathbb{Q(\xi)}}{t}{\psi}, \\
    \frac{d\mathbb{Q(\xi)}}{d\mathbb{P}} 
             &= \frac{V'(-\xi)}{\EP{V'(-\xi)}} =
               \frac{\exp\left(a\xi\right)}{\EP{\exp\left({a\xi}\right)}}.     
  \end{align*}
\end{Remark}

\section{The MRP for analytic fields of martingales}
\label{sec:analyticity-mr}

Let $\mathbf{X}$ be a Banach space and $U$ be an open connected set in
$\mathbf{R}^d$. We recall that a map $x\mapsto X(x)$ from $U$ to
$\mathbf{X}$ is \emph{analytic} if for every $y\in U$ there exist a
number $\epsilon = \epsilon(y)>0$ and elements $(Y_\alpha(y))$ in
$\mathbf{X}$ such that the $\epsilon$-neighborhood of $y$ belongs to
$U$ and
\begin{displaymath}
  X(x) = \sum_\alpha Y_\alpha(y) (x-y)^\alpha, \quad \abs{y-x}<\epsilon. 
\end{displaymath}
Here the series converges in the norm $\norm{\cdot}_{\mathbf{X}}$ of
$\mathbf{X}$, the summation is taken with respect to multi-indices
$\alpha = (\alpha_1, \dots,\alpha_d) \in \mathbf{Z}_+^l$ of
non-negative integers, and if $x = (x_1,\dots,x_d) \in \mathbf{R}^d$,
then $x^\alpha \set \prod_{i=1}^d x_i^{\alpha_i}$.

This is our second main result.

\begin{Theorem}
  \label{th:2}
  Let $U$ be an open connected set in $\mathbf{R}^l$ and suppose that
  the point $x_0\in\mathbf{R}^l$ belongs to the closure of $U$. Let
  $x\mapsto \zeta(x)$ and $x\mapsto \xi(x)$ be continuous maps from
  ${U}\cup \braces{x_0}$ to $\mathcal{L}_1(\mathbf{R})$ and
  $\mathcal{L}_1(\mathbf{R}^d)$, respectively, whose restrictions to
  $U$ are analytic. For every $x\in {U}\cup \braces{x_0}$, assume that
  $\zeta(x)>0$ and define a probability measure $\mathbb{Q}(x)$ and a
  $\mathbb{Q}(x)$-martingale $S(x)$ by
  \begin{align*}
    \frac{d\mathbb{Q}(x)}{d\mathbb{P}} =
    \frac{\zeta(x)}{\EP{\zeta(x)}}, \quad S_t(x) =
    \cE{\mathbb{Q}(x)}{t}{\frac{\xi(x)}{\zeta(x)}}.
  \end{align*}
  If the $\mathbb{Q}(x_0)$-martingale $S(x_0)$ has the MRP, then the
  exception set
  \begin{displaymath}
    I \set \descr{x\in U}{\text{the $\mathbb{Q}(x)$-martingale }S(x)
      \text{ does not have the MRP}}  
  \end{displaymath}
  has Lebesgue measure zero. If, in addition, $U$ is an interval in
  $\mathbf{R}$, then the set $I$ is at most countable.
\end{Theorem}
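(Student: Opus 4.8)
The plan is to characterize the MRP failure through an auxiliary martingale and then invoke an "analytic alternative" / identity theorem for analytic Banach-space-valued maps. By Jacod's theorem (Remark~\ref{rem:1}), $S(x)$ fails the MRP exactly when there exists a nonconstant bounded $\mathbb{Q}(x)$-martingale $N$ that is orthogonal to $S(x)$ (in the sense that $NS^i(x)$ is a $\mathbb{Q}(x)$-martingale for each $i$, or equivalently $[N,S^i(x)]$ is a local martingale), i.e.\ when there is a second equivalent martingale measure for $S(x)$. The first step is therefore to translate everything into the fixed measure $\mathbb{P}$: writing $Z_t(x)=\cE{\mathbb{P}}{t}{\zeta(x)}/\EP{\zeta(x)}$ for the density process, $S(x)$ has the MRP under $\mathbb{Q}(x)$ if and only if the $d+1$ dimensional $\mathbb{P}$-martingale $(Z(x), \xi(x)\cdot\text{[something]})$ — more precisely the pair consisting of $Z(x)$ and the $\mathbb{P}$-martingale with terminal value $\xi(x)$ — has the MRP under $\mathbb{P}$. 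Call this combined martingale $M(x)$, with $M_\infty(x)$ depending analytically (resp.\ continuously at $x_0$) on $x$ in $\mathcal{L}_1(\mathbf{R}^{d+1})$.

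The second step is the heart of the argument: the MRP for a family of $\mathcal{L}_1$-martingales is an "analytically generic" property. I would show that the map $x\mapsto M(x)\in\mathcal{L}_1$ being analytic forces the set of $x$ where $M(x)$ lacks the MRP to be the zero set of some nonzero real-analytic-type obstruction, unless it is everything. The mechanism: failure of MRP at $x$ is witnessed by a nonzero $H(x)$ in the (closed) subspace of $\mathcal{L}_1$ martingales strongly orthogonal to $M(x)$; one wants to propagate such a witness analytically. The clean way is a duality/Hahn–Banach argument: $M(x)$ has the MRP iff the only bounded martingale orthogonal to it is constant, and one sets up a fixed bounded martingale $N$ (the witness at a point of $I$, if $I$ has positive measure, chosen via a measure-theoretic selection) and considers the scalar analytic function $x\mapsto \langle N, M(x)\rangle$ or rather the bilinear pairing encoding orthogonality; its vanishing on a positive-measure set, combined with analyticity, forces it to vanish identically, hence $N$ is orthogonal to $M(x_0)$, contradicting the MRP of $S(x_0)$. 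Making "orthogonal to $M(x)$ for a whole family" precise requires handling the fact that the orthogonal complement itself moves with $x$; I expect to fix this by working with the projection of a fixed martingale onto the stable subspace generated by $M(x)$ and showing this projection is analytic in $x$, then using that its deviation from the identity is an analytic $\mathcal{L}_1$-valued map whose "large" set is either full or Lebesgue-null.

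The third step handles the two conclusions. For the general open connected $U\subset\mathbf{R}^l$: an analytic map into a Banach space that vanishes on a set of positive Lebesgue measure vanishes identically (the finite-dimensional identity theorem applied coordinatewise to the power-series coefficients, bootstrapped from a density/Lebesgue-point argument), so either $I=U$ or $I$ is contained in the zero set of a not-identically-zero real-analytic function, which has Lebesgue measure zero. The hypothesis that $S(x_0)$ has the MRP and $x_0\in\overline U$, together with continuity at $x_0$, rules out $I=U$: if $I=U$ then by continuity the witness would survive to $x_0$. For the refinement when $U\subset\mathbf{R}$ is an interval: the zero set of a nonzero real-analytic function of one variable is discrete, hence at most countable.

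The main obstacle I anticipate is the second step — rigorously producing, from a single MRP failure (or from a positive-measure set of failures), a \emph{single fixed} bounded $\mathbb{P}$-martingale $N$ together with an analytic scalar function of $x$ that encodes "$N$ is orthogonal to $M(x)$," since the natural witnesses $H(x)$ a priori depend on $x$ in an uncontrolled (merely measurable) way. I would resolve this either by a measurable selection yielding a weakly-measurable family and then extracting a fixed direction on a positive-measure subset, or, more robustly, by replacing pointwise orthogonality with the statement that the orthogonal projection $\pi_x$ onto the stable subspace of $M(x)$ is an analytic operator-valued (or, applied to a fixed martingale, $\mathcal{L}_1$-valued) function of $x$ — this analyticity of $x\mapsto\pi_x$ is the technical crux, presumably proved by a Neumann-series / resolvent argument using that $M(x)$ varies analytically, after which "$S(x)$ lacks MRP" becomes "$\pi_x\neq\mathrm{id}$ on some fixed countable dense family of test martingales," each an analytic $\mathcal{L}_1$-valued condition, and the countable union of Lebesgue-null exception sets is Lebesgue-null.
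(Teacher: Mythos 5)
Your overall shape — reduce to a fixed measure $\mathbb{P}$, encode MRP failure as the vanishing of something analytic in $x$, invoke the identity theorem, and use continuity at $x_0$ to exclude the degenerate alternative — matches the paper's strategy. But two of your concrete steps contain genuine gaps. First, the reduction to $\mathbb{P}$ is wrong as stated: the MRP of $S(x)$ under $\mathbb{Q}(x)$ is \emph{not} equivalent to the MRP under $\mathbb{P}$ of the $(d+1)$-dimensional martingale consisting of the density process $Z(x)$ and the $\mathbb{P}$-martingale $R(x)$ with terminal value $\xi(x)$. Take a filtration generated by two independent Brownian motions $B^1,B^2$, let $Z=\mathcal{E}(B^2)$ and $\psi=B^1_T$; then $S=B^1$ fails the MRP under $\mathbb{Q}$, yet the pair $(Z,R)$ with $R=\cEP{\cdot}{B^1_T\mathcal{E}(B^2)_T}$ has full rank and hence the MRP under $\mathbb{P}$. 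The correct translation (the paper's Theorem~\ref{th:4} combined with integration by parts) is that $S(x)$ has the MRP under $\mathbb{Q}(x)$ iff the Girsanov transform $S(x)+\qcov{S(x)}{\alpha(x)\cdot X}=S_0(x)+\sigma(x)\cdot X$ has it under $\mathbb{P}$, where $\sigma(x)=\beta(x)-\alpha(x)S^*_{-}(x)$ mixes the integrands of $R(x)$ and $Y(x)$; your unmixed pair records strictly more information and yields too small an exception set.

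Second, and more seriously, your proposed mechanism for the "analytic alternative" does not survive scrutiny. The route via analyticity (or even continuity) of the orthogonal projection $\pi_x$ onto the stable subspace generated by $M(x)$ is refuted by the paper's own Example~\ref{ex:1}: there the stable subspace is everything for $x\notin I$ and drops at each $x_m\in I$, so $x\mapsto\pi_x$ jumps and no Neumann-series argument can make it analytic. The fixed-witness route is also unsubstantiated: a bounded martingale orthogonal to $M(x_1)$ for one $x_1\in I$ need not witness failure at any other point of $I$, and you give no selection producing a single $N$ working on a positive-measure set. The paper resolves exactly this difficulty by a different device: it fixes one bounded reference martingale $X$ with the MRP, proves (Theorem~\ref{th:3}, a nontrivial analytic-selection result) that the integrands $\sigma_t(x)(\omega)$ can be chosen analytic in $x$ for \emph{every} $(t,\omega)$, reduces the MRP of $\sigma(x)\cdot X$ to the pointwise rank condition $\rank\kappa^X\sigma(x)=\rank\kappa^X$ $\mu^X$-a.s.\ (Lemma~\ref{lem:4}), and then applies the scalar identity theorem for each fixed $(t,\omega)$ to the real-analytic function of Lemma~\ref{lem:5}, gluing over $(t,\omega)$ with Fubini. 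Ruling out the degenerate alternative at $x_0$ also requires real work ($\mathcal{H}_{1,loc}$ convergence of the integrands and lower semicontinuity of the rank), not just "the witness survives by continuity." Without a substitute for these steps your argument does not close.
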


The following example shows that \emph{any} countable set $I$ in
$\mathbf{R}$ can play the role of the exception set of
Theorem~\ref{th:2}. In this example we choose $\zeta(x) =1$ (so that
$\mathbb{Q}(x)=\mathbb{P}$) and take $x\mapsto \xi(x)$ to be a
\emph{linear} map from $\mathbf{R}$ to
$\mathcal{L}_\infty(\mathbf{R})$.

\begin{Example}
  \label{ex:1}
  Let $(\Omega,\mathcal{F},(\mathcal{F}_n), \mathbb{P})$ be a filtered
  probability space, where the filtration is generated by independent
  Bernoulli random variables $(\epsilon_n)$ with
  \begin{displaymath}
    \ProbP{\epsilon_n=1} = \ProbP{\epsilon_n=-1} = \frac12. 
  \end{displaymath}
  It is well known that every martingale $(N_n)$ admits the unique
  ``integral'' representation:
  \begin{equation}
    \label{eq:3}
    N_n = N_0 + \sum_{k=1}^n h_k(\epsilon_1, \dots,\epsilon_{k-1})
    \epsilon_k, 
  \end{equation}
  for some functions $h_k = h_k(x_1,\dots,x_{k-1})$, $k\geq 1$, where
  $h_1$ is just a constant.

  Let $I=(x_n)$ be an arbitrary sequence in $\mathbf{R}$. We define a
  linear map $x\mapsto \xi(x)$ from $\mathbf{R}$ to
  $\mathcal{L}_\infty(\mathbf{R})$ by
  \begin{displaymath}
    \xi(x)=\sum_{n=1}^{\infty}
    \frac{(x-x_n)}{2^n(1+\abs{x_n})} \epsilon_n = \psi_0 + \psi_1 x,  
  \end{displaymath}
  where $\psi_0$ and $\psi_1$ are bounded random variables:
  \begin{displaymath}
    \psi_0= - \sum_{n=1}^{\infty}\frac{x_n}{2^n(1+\abs{x_n})}
    \epsilon_n,\quad
    \psi_1=\sum_{n=1}^{\infty}
    \frac{1}{2^n(1+\abs{x_n})} \epsilon_n. 
  \end{displaymath} 
  We have that
  \begin{displaymath}
    M_n(x) = \cEP{n}{\xi(x)} = \mathbb{E}\left[\xi(x) \lvert
      \epsilon_1,\dots,\epsilon_n\right] = \sum_{k=1}^{n}
    \frac{(x-x_k)}{2^k(1+\abs{x_k})} \epsilon_k
  \end{displaymath}
  and thus
  \begin{displaymath}
    \Delta M_n(x) = M_n(x) - M_{n-1}(x) =
    \frac{(x-x_n)}{2^n(1+\abs{x_n})} \epsilon_n. 
  \end{displaymath}
  If $x\not\in I$, then the martingale $(N_n)$ from~\eqref{eq:3} is a
  stochastic integral with respect to $M(x)$:
  \begin{displaymath}
    N_n = N_0 + \sum_{k=1}^n
    h_k(\epsilon_1,\dots,\epsilon_{k-1})\frac{2^k(1+\abs{x_k})}{(x-x_k)}
    \Delta M_k(x).
  \end{displaymath}
  However, if $x_m \in I$, then the martingales $M(x_m)$ and
  \begin{displaymath}
    L^{(m)}_n = \sum_{k=1}^n \ind{k=m}\epsilon_k = \ind{n\geq
      m}\epsilon_m, \quad n\geq 0, 
  \end{displaymath}
  are orthogonal. Hence, $L^{(m)}$ does not admit an integral
  representation with respect to $M(x_m)$.
\end{Example}

The rest of the section is devoted to the proof of
Theorem~\ref{th:2}. It relies on Theorems~\ref{th:3} and \ref{th:4}
from the appendices and on the lemmas below.

Throughout the paper all operations on stochastic processes are
defined pointwise, for every $(t,\omega)$. In particular, if $X$ is a
matrix-valued process, then $\abs{X}$ denotes the one-dimensional
process of the running norm:
\begin{displaymath}
  \abs{X}_t(\omega) \set \abs{X_t(\omega)}. 
\end{displaymath}

Let $X$ be a (uniformly) square integrable martingale taking values in
$\mathbf{R}^m$. We denote by $\qvar{X} = ([X^i,X^j])$ its process of
quadratic variation and by $\pvar{X} = (\pcov{X^i}{X^j})$ its
predictable process of quadratic variation; they both take values in
the cone $\mathcal{S}^m_+$ of symmetric nonnegative
$m\times m$-matrices. We define the predictable increasing process
\begin{displaymath}
  A^X \set \trace{\pvar{X}} = \sum_{i=1}^m \pcov{X^i}{X^i}.
\end{displaymath}
Standard arguments show that there is a predictable process
$\kappa^{X}$ with values in $\mathcal{S}^m_+$ such that
\begin{displaymath}
  \pvar{X} = (\kappa^{X})^2 \cdot A^X.
\end{displaymath}

On the predictable $\sigma$-algebra $\mathcal{P}$ of
$[0,\infty) \times \Omega$ we introduce a measure
\begin{displaymath}
  \mu^{X}(dt,d\omega) \set dA^X_t(\omega) \ProbP{d\omega}.
\end{displaymath}
For a nonnegative predictable process $\gamma$ the expectation under
$\mu^{X}$ is given by
\begin{displaymath}
  \E{\mu^{X}}{\gamma} = \EP{\int_0^\infty \gamma dA^X} =
  \EP{\int_0^\infty \gamma_t dA^X_t}. 
\end{displaymath}
We observe that this measure is finite:
\begin{displaymath}
  \mu^{X}([0,\infty)\times \Omega) = \EP{A^X_\infty} =
  \EP{\abs{X_\infty - X_0}^2}<\infty.
\end{displaymath}
For predictable $m$-dimensional processes $(\gamma^n)$ and $\gamma$
the notation $\gamma^n \overset{\mu^{X}}{\rightarrow} \gamma$ stands
for the convergence in measure $\mu^{X}$:
\begin{displaymath}
  \forall \epsilon >0: \quad \mM{\mu^{X}}{\abs{\gamma^n -
      \gamma}>\epsilon} 
  \to 0, \quad n\to \infty.  
\end{displaymath} 

\begin{Lemma}
  \label{lem:2}
  Let $X$ be a square integrable martingale with values in
  $\mathbf{R}^m$ and $\gamma$ be a predictable $m$-dimensional
  process. Then $\gamma$ is $X$-integrable and $\gamma\cdot X =0$ if
  and only if $\kappa^{X}\gamma = 0$, $\mu^{X}-a.s.$.
\end{Lemma}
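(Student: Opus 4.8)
The plan is to characterize, for a square-integrable martingale $X$ in $\mathbf{R}^m$, when a predictable process $\gamma$ satisfies $\gamma \cdot X = 0$. First I would recall that for $\gamma$ to be $X$-integrable and yield $\gamma \cdot X = 0$, it is equivalent (by the definition of the stochastic integral and its quadratic variation) that $\pvar{\gamma \cdot X} = 0$ identically, i.e. $\EP{[\gamma\cdot X, \gamma\cdot X]_\infty} = 0$. Using the bilinearity of the predictable quadratic covariation and the identity $\pvar{\gamma\cdot X} = \gamma^* \cdot \pvar{X} \cdot \gamma$ in the appropriate sense (i.e. $\pvar{\gamma\cdot X}_t = \int_0^t \gamma_s^* \, d\pvar{X}_s \, \gamma_s$), I would rewrite this using the decomposition $\pvar{X} = (\kappa^X)^2 \cdot A^X$ provided in the text. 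This gives
\begin{displaymath}
  \pvar{\gamma\cdot X}_t = \int_0^t \gamma_s^* (\kappa^X_s)^2 \gamma_s \, dA^X_s = \int_0^t \abs{\kappa^X_s \gamma_s}^2 \, dA^X_s,
\end{displaymath}
where the last equality uses that $\kappa^X$ is symmetric, so $\gamma^* (\kappa^X)^2 \gamma = (\kappa^X\gamma)^*(\kappa^X\gamma) = \abs{\kappa^X\gamma}^2$.

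Next I would translate everything into the language of the measure $\mu^X$ introduced just before the lemma. The quantity $\EP{\pvar{\gamma\cdot X}_\infty}$ equals $\E{\mu^X}{\abs{\kappa^X\gamma}^2}$. Hence $\gamma\cdot X$ (when defined) vanishes if and only if $\abs{\kappa^X\gamma}^2 = 0$ holds $\mu^X$-almost surely, which is exactly the condition $\kappa^X\gamma = 0$, $\mu^X$-a.s. This handles the implication in both directions \emph{once integrability is known}: if $\kappa^X\gamma = 0$ $\mu^X$-a.s., then $\abs{\kappa^X\gamma}^2$ integrates to zero against $\mu^X$, the candidate integral has zero predictable quadratic variation, hence is the zero martingale; conversely if $\gamma\cdot X = 0$ then its predictable quadratic variation is zero and the same computation forces $\kappa^X\gamma = 0$ $\mu^X$-a.s.

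The step I expect to be the main technical obstacle is the \emph{integrability} claim: showing that $\kappa^X\gamma = 0$ $\mu^X$-a.s. already implies $\gamma$ is $X$-integrable (in the sense of stochastic integration for vector martingales, e.g. the Jacod or Memin definition). The point is that $X$-integrability is usually phrased as $\left(\int_0^\cdot \gamma_s^* \, d\pvar{X}_s \, \gamma_s\right)^{1/2}$ being locally integrable (or the analogous condition on $\qvar{X}$), and here that integrand is $\abs{\kappa^X\gamma}^2$, which is $\mu^X$-a.e. zero and hence has an integral process that is indistinguishable from $0$; so the integrability condition is satisfied trivially and the resulting integral is a well-defined martingale that is indistinguishable from $0$. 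I would make this rigorous by invoking the standard characterization of $X$-integrability via the predictable bracket (as recalled in the paper's preliminaries on $\gamma\cdot X$), noting that modifying $\gamma$ on a $\mu^X$-null predictable set changes neither integrability nor the integral. One should also remark that $\kappa^X\gamma$ is a well-defined predictable process since $\kappa^X$ is predictable and matrix multiplication is continuous, so the $\mu^X$-a.s. statement is meaningful. The remaining details — the precise form of $\pvar{\gamma\cdot X}$ and the symmetry manipulation — are routine.
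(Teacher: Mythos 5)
Your proof is correct and follows essentially the same route as the paper's: both rest on the identity $\pvar{\gamma\cdot X}_t=\int_0^t\abs{\kappa^X\gamma}^2\,dA^X$ and the resulting equality $\EP{\pvar{\gamma\cdot X}_\infty}=\E{\mu^X}{\abs{\kappa^X\gamma}^2}$. The only cosmetic difference is that the paper first reduces to bounded $\gamma$ via the truncations $\gamma\ind{\abs{\gamma}\leq n}\cdot X\to\gamma\cdot X$ in the semimartingale topology, whereas you handle general $\gamma$ by appealing directly to the predictable-bracket characterization of vector integrability; both are legitimate ways of dispatching the integrability issue.
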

\begin{proof}
  Since $\gamma \ind{\abs{\gamma}\leq n}\cdot X \to \gamma\cdot X$ as
  $n\to \infty$ in the semimartingale topology, we can assume without
  a loss in generality that $\gamma$ is bounded. Then $\gamma\cdot X$
  is a square integrable martingale with predictable quadratic
  variation
  \begin{displaymath}
    \langle \gamma\cdot X \rangle_t =
    \int_0^t\abs{{\kappa^{X}\gamma}}^2dA^X =
    \int_0^t\abs{{\kappa^{X}_s\gamma}_s}^2dA^X_s 
  \end{displaymath}
  and the result follows from the identity:
  \begin{displaymath}
    \EP{(\gamma\cdot X)_\infty^2} = \EP{\langle \gamma\cdot X
      \rangle_\infty} = \EP{\int_0^\infty \abs{{\kappa^{X}\gamma}}^2 dA^X} =
    \E{\mu^{X}}{\abs{{\kappa^{X}\gamma}}^2}. 
  \end{displaymath}
\end{proof}

For every predictable process $\zeta$ taking values in
$\mathcal{S}^m_+$ we can naturally define a $\mathcal{S}^m_+$-valued
predictable process $\zeta^\oplus$ such that for all $(t,\omega)$ the
matrix $\zeta^\oplus_t(\omega)$ is the pseudo-inverse to the matrix
$\zeta_t(\omega)$.

From Lemma~\ref{lem:2} we deduce that if $\alpha$ is an integrand for
$X$ then the predictable process
\begin{displaymath}
  \beta \set {\kappa^X}^{\oplus}{\kappa^X} \alpha
\end{displaymath}
is also $X$-integrable and $\alpha\cdot X = \beta\cdot X$. Moreover,
$\abs{\beta} \leq \abs{\alpha}$, by the minimal norm property of the
pseudo-inverse matrices. In view of this property, we call a
predictable $m$-dimensional process $\gamma$ a \emph{minimal
  integrand} for $X$ if $\gamma$ is $X$-integrable and
\begin{displaymath}
  \gamma =  {\kappa^X}^{\oplus}{\kappa^X} \gamma. 
\end{displaymath}
From the definition of a minimal integrand we immediately deduce that
\begin{equation}
  \label{eq:4}
  \abs{{\kappa^X}\gamma} \leq \abs{{\kappa^X}}\abs{\gamma},\quad
  \abs{\gamma} \leq \abs{{\kappa^X}^{\oplus}} \abs{{\kappa^X}\gamma}, 
\end{equation}
where, following our convention, both the norm and the inequalities
are defined pointwise, for every $(t,\omega)$.

We denote by $\mathcal{H}_1=\mathcal{H}_1(\mathbf{R}^d)$ the Banach
space of uniformly integrable $d$-dimensional martingales $M$ with the
norm:
\begin{align*}
  \sNorm{M}{\mathcal{H}_1} &\set \EP{\sup_{t\geq 0} \abs{M_t}}.
\end{align*}
By Davis' inequality, the convergence $M^n\to 0$ in $\mathcal{H}_1$ is
equivalent to the convergence $\qvar{M^n}_\infty^{1/2} \to 0$ in
$\mathcal{L}_1$, where $\qvar{M^n}$ is the quadratic variation process
of $M^n$.

We say that a sequence $(N^n)$ of local martingales converges to a
local martingale $N$ in $\mathcal{H}_{1,loc}$ if there are stopping
times $(\tau^m)$ such that $\tau^m\uparrow \infty$ and
$N^{n,\tau^m}\to N^{\tau^m}$ in $\mathcal{H}_1$. Here as usual, we
write $Y^{\tau} \set (Y_{\min(t,\tau)})$ for a semimartingale $Y$
stopped at a stopping time $\tau$.

\begin{Lemma}
  \label{lem:3}
  Let $X$ be a square integrable martingale with values in
  $\mathbf{R}^m$ and $(\gamma^n)$ be a sequence of predictable
  $m$-dimensional $X$-integrable processes such that the stochastic
  integrals $(\gamma^n\cdot X)$ converge to $0$ in
  $\mathcal{H}_{1,loc}$. Then
  ${\kappa^{X}}\gamma^n \overset{\mu^{X}}{\rightarrow} 0$. If, in
  addition, $(\gamma^n)$ are minimal integrands then
  $\gamma^n \overset{\mu^{X}}{\rightarrow} 0$.
\end{Lemma}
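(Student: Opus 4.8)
The plan is to reduce the $\mathcal{H}_{1,loc}$-convergence to convergence in measure of the relevant quadratic-variation densities, using the characterization of the norm $\mathcal{H}_1$ via Davis' inequality that has already been recalled, together with Lemma~\ref{lem:2}. First I would localize: fix the stopping times $(\tau^m)$ with $\tau^m\uparrow\infty$ along which $\gamma^n\cdot X\to 0$ in $\mathcal{H}_1$, so that for each fixed $m$ the stopped integrals $(\gamma^n\cdot X)^{\tau^m}=(\gamma^n\setind{[0,\tau^m]})\cdot X$ converge to $0$ in $\mathcal{H}_1$. By Davis' inequality this is equivalent to $\qvar{(\gamma^n\cdot X)^{\tau^m}}_\infty^{1/2}\to 0$ in $\mathcal{L}_1$, hence in probability. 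Now by the computation in the proof of Lemma~\ref{lem:2} (valid after the usual truncation $\gamma^n\setind{\abs{\gamma^n}\le k}$, since these integrands produce integrals converging to $\gamma^n\cdot X$ in the semimartingale topology, and the predictable quadratic variation is an increasing limit), the predictable quadratic variation of $(\gamma^n\cdot X)^{\tau^m}$ equals $\int_0^{\cdot}\abs{\kappa^X\gamma^n}^2\setind{[0,\tau^m]}\,dA^X$. Since $\qvar{\cdot}_\infty$ and $\pvar{\cdot}_\infty$ have the same expectation for square-integrable martingales, convergence of $\qvar{(\gamma^n\cdot X)^{\tau^m}}_\infty^{1/2}\to0$ in $\mathcal{L}_1$ forces $\EP{\int_0^{\tau^m}\abs{\kappa^X\gamma^n}^2\,dA^X}\to 0$, i.e. $\abs{\kappa^X\gamma^n}^2\setind{[0,\tau^m]}\to 0$ in $\mathcal{L}_1(\mu^X)$, and in particular in $\mu^X$-measure.

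Next I would remove the localization. Because $\mu^X$ is a \emph{finite} measure on $\mathcal{P}$ (this was noted explicitly: $\mu^X([0,\infty)\times\Omega)=\EP{\abs{X_\infty-X_0}^2}<\infty$) and $\setind{[0,\tau^m]}\uparrow 1$ pointwise as $m\to\infty$, we have $\mu^X(([0,\infty)\times\Omega)\setminus[0,\tau^m])\to 0$. So for any $\epsilon>0$, $\mu^X(\abs{\kappa^X\gamma^n}>\epsilon)\le\mu^X(\abs{\kappa^X\gamma^n}\setind{[0,\tau^m]}>\epsilon)+\mu^X(([0,\infty)\times\Omega)\setminus[0,\tau^m])$; choosing $m$ large to make the second term small and then $n$ large to make the first term small (using the previous paragraph for that fixed $m$) gives $\kappa^X\gamma^n\xrightarrow{\mu^X}0$, which is the first claim.

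For the second claim, suppose in addition that each $\gamma^n$ is a minimal integrand, so $\gamma^n={\kappa^X}^\oplus\kappa^X\gamma^n$ and, by the second inequality in~\eqref{eq:4}, $\abs{\gamma^n}\le\abs{{\kappa^X}^\oplus}\,\abs{\kappa^X\gamma^n}$ pointwise. The subtlety here is that $\abs{{\kappa^X}^\oplus}$ need not be bounded, so $\abs{\kappa^X\gamma^n}\to0$ in measure does not immediately give $\abs{\gamma^n}\to0$ in measure; this is the main obstacle. I would handle it by splitting on the set where $\abs{{\kappa^X}^\oplus}$ is large: fix $\delta>0$, and write $\mu^X(\abs{\gamma^n}>\epsilon)\le\mu^X(\abs{{\kappa^X}^\oplus}>R)+\mu^X(\abs{\kappa^X\gamma^n}>\epsilon/R)$ for any $R>0$. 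Since $\abs{{\kappa^X}^\oplus}$ is a finite-valued predictable process and $\mu^X$ is finite, $\mu^X(\abs{{\kappa^X}^\oplus}>R)\to0$ as $R\to\infty$; pick $R$ with this term below $\delta$, then let $n\to\infty$ using the first claim to kill the second term. Hence $\gamma^n\xrightarrow{\mu^X}0$, completing the proof.
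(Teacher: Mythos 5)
Your second and third paragraphs (removing the localization via the finiteness of $\mu^X$, and handling the unboundedness of $\abs{{\kappa^X}^\oplus}$ by splitting on $\braces{\abs{{\kappa^X}^\oplus}>R}$) are sound and match what the paper does implicitly. The gap is in the first paragraph, at the step ``convergence of $\qvar{(\gamma^n\cdot X)^{\tau^m}}_\infty^{1/2}\to 0$ in $\mathcal{L}_1$ forces $\EP{\int_0^{\tau^m}\abs{\kappa^X\gamma^n}^2\,dA^X}\to 0$.'' This implication is false: $\mathcal{L}_1$-convergence of $\qvar{M^n}_\infty^{1/2}$ does not give convergence of $\EP{\qvar{M^n}_\infty}$ (equivalently of $\EP{\pvar{M^n}_\infty}$) without uniform integrability. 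A counterexample satisfying all hypotheses of the lemma: let $X$ be a Brownian motion stopped at $1$ and $\gamma^n_t \set n^2\,\ind{t>1/2}\ind{B_{1/2}\in C_n}$ with $\ProbP{B_{1/2}\in C_n}=n^{-3}$; then $\EP{\qvar{\gamma^n\cdot X}_\infty^{1/2}}=O(n^{-1})\to 0$, so $\gamma^n\cdot X\to 0$ in $\mathcal{H}_1$, while $\EP{\qvar{\gamma^n\cdot X}_\infty}=n/2\to\infty$. So the intermediate statement you derive ($\mathcal{L}_2(\mu^X)$-convergence of $\kappa^X\gamma^n$ on $[0,\tau^m]$) is strictly stronger than what the hypotheses yield and is actually false in general; your route to the (true) conclusion breaks here. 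A secondary issue: you invoke $\pvar{(\gamma^n\cdot X)^{\tau^m}}$ and the identity of expectations ``for square-integrable martingales,'' but $\mathcal{H}_1$-convergence does not make the stopped integrals square integrable, so the objects you manipulate are not yet known to exist at that point.

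The paper closes exactly this gap with a normalization that you skip. It first replaces $\gamma^n$ by $\beta^n\set\gamma^n/(1+\abs{\gamma^n})$, which is again a (minimal) integrand pointing in the same direction, satisfies $\abs{\beta^n}\le 1$ and $\qvar{\beta^n\cdot X}\le\qvar{\gamma^n\cdot X}$, so that $\beta^n\cdot X\to 0$ in $\mathcal{H}_1$ as well. The bound $\abs{\beta^n}\le 1$ gives the domination $\qvar{\beta^n\cdot X}_\infty\le\trace\qvar{X}_\infty\in\mathcal{L}_1$, and only then does dominated convergence upgrade convergence in probability of $\qvar{\beta^n\cdot X}_\infty$ to convergence in $\mathcal{L}_1$, yielding $\E{\mu^{X}}{\abs{\kappa^X\beta^n}^2}\to 0$ and hence $\beta^n\overset{\mu^{X}}{\rightarrow}0$; since $\abs{\gamma^n}>\epsilon$ if and only if $\abs{\beta^n}>\epsilon/(1+\epsilon)$, this transfers back to $\gamma^n$ and then to $\kappa^X\gamma^n$. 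To repair your argument, insert this normalization (or an equivalent uniform-integrability or Lenglart-type step) before passing from the square root of the quadratic variation to its expectation; the rest of your proof can then stand.
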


\begin{proof}
  It is sufficient to consider the case of minimal integrands.  By
  localization, we can suppose that $\gamma^n\cdot X \to 0$ in
  $\mathcal{H}_1$, which by Davis' inequality is equivalent to the
  convergence of $(\qvar{\gamma^n\cdot X}_\infty^{1/2})$ to $0$ in
  $\mathcal{L}_1$.

  Assume for a moment that $\abs{\gamma^n}\leq 1$.  Then
  $\qvar{\gamma^n\cdot X} \leq \qvar{X}$ and the theorem of dominated
  convergence yields that $\qvar{\gamma^n\cdot X}_\infty \to 0$ in
  $\mathcal{L}_1$. As
  \begin{displaymath}
    \EP{\qvar{\gamma^n\cdot X}_\infty} =      \EP{\pvar{\gamma^n\cdot
        X}_\infty} =  \EP{\int_0^\infty \abs{\kappa^X \gamma^n}^2
      dA^X} = \E{\mu^{X}}{\abs{{\kappa^X}\gamma^n}^2},
  \end{displaymath}
  we deduce that
  ${\kappa^{X}}\gamma^n \overset{\mu^{X}}{\rightarrow} 0$, which in
  view of~\eqref{eq:4}, also implies that
  $\gamma^n \overset{\mu^{X}}{\rightarrow} 0$.

  In the general case, we observe that
  \begin{displaymath}
    \beta^n \set \frac1{1+\abs{\gamma^n}}\gamma^n
  \end{displaymath}
  are minimal integrands for $X$ such that $\abs{\beta^n}\leq 1$ and
  $\qvar{\beta^n\cdot X} \leq \qvar{\gamma^n\cdot X}$.  Hence, by what
  we have already proved, $\beta^n \overset{\mu^{X}}{\rightarrow} 0$,
  which clearly yields that
  $\gamma^n \overset{\mu^{X}}{\rightarrow} 0$ and then that
  ${\kappa^{X}}\gamma^n \overset{\mu^{X}}{\rightarrow} 0$.
\end{proof}

\begin{Lemma}
  \label{lem:4}
  Let $X$ be a square integrable $m$-dimensional martingale and
  $\gamma = (\gamma^{ij})$ be a predictable $X$-integrable process
  with values in $\mathbf{R}^{m\times d}$. Then $X$ is a stochastic
  integral with respect to $Y\set \gamma\cdot X$, that is
  $X = X_0 + \zeta\cdot Y$ for some predictable $Y$-integrable
  $d\times m$-dimensional process $\zeta$, if and only if
  \begin{equation}
    \label{eq:5}
    \rank{{\kappa^{X}}\gamma} = \rank{\kappa^{X}}, \quad
    \mu^{X}-a.s..  
  \end{equation}
\end{Lemma}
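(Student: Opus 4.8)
The plan is to translate the representation property into a pointwise linear-algebra condition on $\kappa^X\gamma$ by means of Lemma~\ref{lem:2}. Write $Y\set\gamma\cdot X$ and let $\mathrm{Id}$ denote the constant process equal to the $m\times m$ identity matrix, so that $\mathrm{Id}$ is $X$-integrable with $\mathrm{Id}\cdot X=X-X_0$. Recall from the preliminaries that a predictable $d\times m$ process $\zeta$ is $Y$-integrable if and only if $\gamma\zeta$ is $X$-integrable, in which case $\zeta\cdot Y=(\gamma\zeta)\cdot X$. Hence $X=X_0+\zeta\cdot Y$ for some $Y$-integrable $\zeta$ if and only if there is a predictable $d\times m$ process $\zeta$ such that $\gamma\zeta$ is $X$-integrable and $(\gamma\zeta-\mathrm{Id})\cdot X=0$; applying Lemma~\ref{lem:2} to each of the $m$ columns of $\gamma\zeta-\mathrm{Id}$ (and using $\kappa^X\,\mathrm{Id}=\kappa^X$), this is equivalent to the existence of a predictable $d\times m$ process $\zeta$ with $\gamma\zeta$ $X$-integrable and
\begin{displaymath}
  \kappa^X\gamma\zeta=\kappa^X,\qquad \mu^X\text{-a.s.}
\end{displaymath}
So the whole statement reduces to proving that such a $\zeta$ exists if and only if~\eqref{eq:5} holds.

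For the ``only if'' direction, if $\kappa^X\gamma\zeta=\kappa^X$ $\mu^X$-a.s.\ then, pointwise $\mu^X$-a.s., the column space of $\kappa^X$ is contained in that of $\kappa^X\gamma$ (since $\kappa^X=(\kappa^X\gamma)\zeta$), while the column space of $\kappa^X\gamma$ is always contained in that of $\kappa^X$; the two coincide, whence $\rank(\kappa^X\gamma)=\rank(\kappa^X)$ $\mu^X$-a.s. For the ``if'' direction I would take the explicit candidate
\begin{displaymath}
  \zeta\set(\kappa^X\gamma)^{\oplus}\kappa^X,
\end{displaymath}
the pointwise Moore--Penrose pseudo-inverse of the $m\times d$ matrix $\kappa^X\gamma$ composed with $\kappa^X$; it is predictable because $M\mapsto M^{\oplus}$ is a Borel map of the matrix entries. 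Pointwise $\mu^X$-a.s., $(\kappa^X\gamma)(\kappa^X\gamma)^{\oplus}$ is the orthogonal projection onto the column space of $\kappa^X\gamma$; under~\eqref{eq:5} this column space equals that of $\kappa^X$ (using the automatic inclusion above together with the equality of ranks), so the projection fixes every column of $\kappa^X$ and therefore $\kappa^X\gamma\zeta=\kappa^X$ $\mu^X$-a.s.

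It then remains to check that this $\zeta$ makes $\gamma\zeta$ genuinely $X$-integrable. Since $\kappa^X\gamma\zeta=\kappa^X$ $\mu^X$-a.s.\ and, using $\pvar{X}=(\kappa^X)^2\cdot A^X$ and $\abs{\kappa^X}^2=\trace\big((\kappa^X)^2\big)$,
\begin{displaymath}
  \E{\mu^X}{\abs{\kappa^X\gamma\zeta}^2}=\E{\mu^X}{\abs{\kappa^X}^2}=\EP{A^X_\infty}=\EP{\abs{X_\infty-X_0}^2}<\infty,
\end{displaymath}
the standard $L^2$-theory of stochastic integration with respect to the square integrable martingale $X$ (the computation already used in the proof of Lemma~\ref{lem:2}, applied column by column) shows that $\gamma\zeta$ is $X$-integrable and $(\gamma\zeta)\cdot X$ is square integrable; the reduction of the first paragraph then gives $X=X_0+\zeta\cdot Y$. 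I expect the main obstacle to be the bookkeeping in that reduction — keeping the matrix-valued stochastic integrals straight, justifying $\zeta\cdot Y=(\gamma\zeta)\cdot X$ and the column-wise use of Lemma~\ref{lem:2}, and making sure the $\mu^X$-bound on $\abs{\kappa^X\gamma\zeta}$ really does deliver $X$-integrability of $\gamma\zeta$; the pointwise facts about pseudo-inverses and column spaces, and the measurability of $M\mapsto M^{\oplus}$, are routine.
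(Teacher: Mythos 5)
Your proof is correct and follows the same route as the paper: reduce via Lemma~\ref{lem:2} to the pointwise solvability of the linear equation $\kappa^X\gamma\zeta=\kappa^X$, $\mu^X$-a.s., and then settle that solvability by linear algebra. The paper compresses the latter step into ``an elementary argument from linear algebra''; your explicit predictable solution $\zeta=(\kappa^X\gamma)^{\oplus}\kappa^X$ is exactly what that argument needs (and note that the ``if'' direction of Lemma~\ref{lem:2} already delivers $X$-integrability of $\gamma\zeta$ from $\kappa^X(\gamma\zeta-\mathrm{Id})=0$, so your closing $L^2$ estimate, while correct, is redundant).
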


\begin{proof}
  We recall that a predictable process $\zeta$ is
  $Y = \gamma\cdot X$-integrable if and only if $\gamma \zeta$ is
  $X$-integrable. From Lemma~\ref{lem:2} we deduce that $\zeta$ is
  $Y$-integrable and satisfies
  \begin{displaymath}
    X = X_0 + \zeta\cdot Y = X_0 + \zeta\cdot (\gamma \cdot X) =
    (\gamma\zeta)\cdot X
  \end{displaymath}
  if and only if
  \begin{displaymath}
    {\kappa^{X}}\gamma \zeta = {\kappa^{X}}, \quad \mu^{X}-a.s.. 
  \end{displaymath}
  However, the solvability of this linear equation with respect to
  $\zeta$ is equivalent to~\eqref{eq:5} by an elementary argument from
  linear algebra.
\end{proof}

\begin{Lemma}
  \label{lem:5}
  Let $U$ be an open connected set in $\mathbf{R}^d$ and
  $x\mapsto \sigma(x)$ be an analytic map with values in
  $k\times l$-matrices. Then there is a nonzero real-analytic function
  $f$ on $U$ such that
  \begin{displaymath}
    E \set \descr{x\in U}{\rank \sigma(x) < \sup_{y\in U} \rank
      \sigma(y)}= \descr{x\in U}{f(x)=0}. 
  \end{displaymath}
  In particular, the set $E$ has Lebesgue measure zero and if $d=1$,
  then it consists of isolated points.
\end{Lemma}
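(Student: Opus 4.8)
The plan is to exhibit $E$ as the zero set of a sum of squares of maximal-size minors of $\sigma$. Set $r \set \sup_{y\in U}\rank\sigma(y)$; since $\rank\sigma(y)$ takes values in the finite set $\{0,1,\dots,\min(k,l)\}$, this supremum is attained at some $y_0\in U$. (If $r=0$ then $\sigma\equiv 0$ on $U$, hence $E=\emptyset$ and one may take $f\equiv 1$; so assume $r\geq 1$.) Recall the elementary linear-algebra fact that a matrix $A$ satisfies $\rank A\geq j$ if and only if some $j\times j$ submatrix of $A$ has nonzero determinant. Applying this with $j=r$, and using that $\rank\sigma(x)\leq r$ for every $x\in U$, I would record that
\[
  E = \descr{x\in U}{\rank\sigma(x)<r} = \descr{x\in U}{\det\sigma_{I,J}(x)=0 \text{ for all } I,J},
\]
where $\sigma_{I,J}(x)$ denotes the $r\times r$ submatrix of $\sigma(x)$ formed by the rows $I\subseteq\{1,\dots,k\}$ and the columns $J\subseteq\{1,\dots,l\}$ with $\abs{I}=\abs{J}=r$.

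Next I would put
\[
  f(x) \set \sum_{\abs{I}=\abs{J}=r} \bigl(\det\sigma_{I,J}(x)\bigr)^2, \qquad x\in U.
\]
Each $\det\sigma_{I,J}$ is a fixed polynomial in the entries of $\sigma(x)$, hence the composition of a polynomial with the analytic map $x\mapsto\sigma(x)$, and is therefore real-analytic on $U$; consequently $f$ is real-analytic. By the description of $E$ above, $f(x)=0$ precisely when $x\in E$, i.e. $E=\descr{x\in U}{f(x)=0}$. Moreover $f$ is not identically zero: at $y_0$ one has $\rank\sigma(y_0)=r$, so at least one $r\times r$ minor is nonzero there and $f(y_0)>0$.

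It remains to draw the two consequences. Since $f$ is a real-analytic function that is not identically zero on the connected open set $U$, its zero set has Lebesgue measure zero by the classical property of (nonzero) real-analytic functions; hence $E$ is Lebesgue-null. If $d=1$, a nonzero real-analytic function on an interval has only zeros of finite order, which are therefore isolated, so $E$ consists of isolated points. The argument is essentially routine; the only point worth emphasizing is that $E$ is cut out not by a single minor — whose zero set would in general be strictly larger than $E$ — but by the simultaneous vanishing of all $r\times r$ minors, which is exactly why the sum of squares $f$ is the right object, and the appeal to the measure-zero property of real-analytic zero sets is the one external ingredient.
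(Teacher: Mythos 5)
Your proof is correct and follows essentially the same route as the paper: the paper also takes $f$ to be the sum over all maximal-size square submatrices $\sigma_\alpha$ of $\det\bigl(\sigma_\alpha\sigma_\alpha^*\bigr)$, which equals your sum of squared minors, and likewise invokes the standard facts about zero sets of nonzero real-analytic functions. No gaps.
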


\begin{proof}
  Let $m \set \sup_{y\in U} \rank \sigma(y)$. If $m=0$, then the set
  $E$ is empty and we can take $f=1$. If $m>0$, then the result holds
  for
  \begin{displaymath}
    f(x) = \sum_{\alpha} \det \sigma_\alpha(x)\sigma^*_\alpha(x), 
  \end{displaymath}
  where $(\sigma_\alpha)$ is the family of all $m\times m$
  sub-matrices of $\sigma$. The remaining assertions follow from the
  well-known properties of zero-sets of real-analytic functions.
\end{proof}

\begin{proof}[Proof of Theorem~\ref{th:2}]
  Without restricting generality we can assume that $\zeta(x_0)=1$
  and, hence, $\mathbb{Q}(x_0)=\mathbb{P}$.  Proposition~2
  in~\cite{KramSirb:06} shows that if some multi-dimensional local
  martingale has the MRP, then there is a {bounded}, hence square
  integrable, $m$-dimensional martingale $X$ that has the MRP.  We fix
  such $X$ and use for it the $\mathcal{S}^m_+$-valued predictable
  process $\kappa^{X}$ and the finite measure $\mu^{X}$ on the
  predictable $\sigma$-algebra $\mathcal{P}$ introduced just before
  Lemma~\ref{lem:2}.

  We define the martingales
  \begin{align*}
    Y_t(x) \set \cEP{t}{\zeta(x)}, \quad R_t(x) \set \cEP{t}{\xi(x)},
  \end{align*}
  and observe that $R(x) = S(x)Y(x)$.  Let $\alpha(x)$ and $\beta(x)$
  be integrands for $X$ with values in $\mathbf{R}^{m}$ and
  $\mathbf{R}^{m\times d}$, respectively, such that
  \begin{align*}
    Y(x) &= Y_0(x) + Y_{-}(x)\alpha(x) \cdot X, \\
    R(x) &= R_0(x) + Y_{-}(x)\beta(x) \cdot X, 
  \end{align*}
  where as usual, $Y_{-}$ stands for the left-continuous process
  $(Y_{t-})$.  Integration by parts yields that
  \begin{align*}
    dR(x) - S_{-}(x) dY(x) = Y_{-}(x)d(S(x) +
    \qcov{S(x)}{\alpha(x)\cdot X}).
  \end{align*}
  It follows that
  \begin{displaymath}
    S(x) + \qcov{S(x)}{\alpha(x)\cdot X} = S_0(x) + \sigma(x)\cdot X,  
  \end{displaymath}
  where
  \begin{align*}
    \sigma(x) = \beta(x) - \alpha(x) S^*_{-}(x).
  \end{align*}
  From Theorem~\ref{th:4} we deduce that $S(x)$ has the MRP (under
  $\mathbb{Q}(x)$) if and only if the stochastic integral
  $\sigma(x)\cdot X$ has the MRP. By Lemma~\ref{lem:4} the latter
  property is equivalent to
  \begin{displaymath}
    \rank {\kappa^{X}}\sigma(x) = \rank \kappa^{X}, \quad
    \mu^{X}-a.s.,    
  \end{displaymath}
  and therefore, the exception set $I$ admits the description:
  \begin{displaymath}
    I = \descr{x\in U}{\mM{\mu^{X}}{D(x)}>0},  
  \end{displaymath}
  where for $x\in U\cup \braces{x_0}$ the predictable set $D(x)$ is
  given by
  \begin{displaymath}
    D(x) = \descr{(t,\omega)}{\rank
      {\kappa^{X}_t(\omega)}\sigma_t(x)(\omega) < 
      \rank \kappa^{X}_t(\omega)}. 
  \end{displaymath}

  From Theorem~\ref{th:3} we deduce the existence of the integrands
  $\alpha(x)$ and $\beta(x)$ and of the modifications of the
  martingales $Y(x)$ and $R(x)$ such that for every
  $(t,\omega)\in [0,\infty)\times \Omega$ the function
  \begin{displaymath}
    x\mapsto \sigma_t(x)(\omega) = \beta_t(x)(\omega) -
    \alpha_t(x)(\omega) \frac{R^*_{t-}(x)(\omega)}{Y_{t-}(x)(\omega)},
  \end{displaymath}
  taking values in the space of $m\times d$-matrices, is analytic on
  $U$. Hereafter, we shall use these versions.

  Let $\lambda$ be the Lebesgue measure on $\mathbf{R}^l$ and
  $\mathcal{B} = \mathcal{B}(U)$ be the Borel $\sigma$-algebra on $U$.
  Since for every $(t,\omega)$ the function
  $x\mapsto \sigma_t(x)(\omega)$ is continuous on $U$, the function
  $(t,\omega,x)\mapsto \sigma_t(x)(\omega)$ is
  $\mathcal{P}\times \mathcal{B}$-measurable. It follows that
  \begin{align*}
    E &\set \descr{(t,\omega,x)}{\rank
        {\kappa^{X}_t(\omega)}\sigma_t(x)(\omega) < \rank
        \kappa^{X}_t(\omega)} 
        \in \mathcal{P}\times \mathcal{B}.
  \end{align*}
  From Fubini's theorem we deduce the equivalences:
  \begin{displaymath}
    \mM{(\mu^{X}\times \lambda)}{E} = 0 \quad \Leftrightarrow \quad
    \mM{\mu^{X}}{F}=0 \quad \Leftrightarrow \quad \mM{\lambda}{I} = 0,  
  \end{displaymath}
  where
  \begin{displaymath}
    F \set \descr{(t,\omega)}{\mM{\lambda}{\descr{x\in U}{\rank
          {\kappa^{X}_t(\omega)}\sigma_t(x)(\omega) < 
          \rank \kappa^{X}_t(\omega)}}>0}. 
  \end{displaymath}
  Hence to obtain the multi-dimensional version of the theorem we need
  to show that $\mu^{X}(F)=0$.

  From Lemma~\ref{lem:5} and the analyticity of the function
  $x\mapsto \sigma_t(x)(\omega)$ we deduce that
  \begin{equation}
    \label{eq:6}
    F = \descr{(t,\omega)}{\rank
      {\kappa^{X}_t(\omega)}\sigma_t(x)(\omega) < 
      \rank \kappa^{X}_t(\omega), \; \forall x\in U}. 
  \end{equation}
  
  We recall now that if $(x_n)$ is a sequence in $U$ that converges to
  $x_0$, then the martingales $(R(x_n),Y(x_n))$ converge to the
  martingale $(R(x_0),Y(x_0)) = (S(x_0),1)$ in $\mathcal{L}_1$. By
  Lemma~\ref{lem:7}, passing to a subsequence, we can assume that
  $(R(x_n),Y(x_n))\to (R(x_0),Y(x_0))$ in $\mathcal{H}_{1,loc}$. From
  Lemma~\ref{lem:3} we deduce that
  \begin{align*}
    {\kappa^{X}}\alpha(x_n)
    &\overset{\mu^{X}}{\rightarrow} 0, \\
    {\kappa^{X}}\beta(x_n) &\overset{\mu^{X}}{\rightarrow}
                             {\kappa^{X}}\beta(x_0) = {\kappa^{X}}\sigma(x_0).
  \end{align*}
  It follows that
  \begin{displaymath}
    {\kappa^{X}}\sigma(x_n) = {\kappa^{X}}(\beta(x_n) -
    \alpha(x_n) S^*_{-}(x_n)) 
    \overset{\mu^{X}}{\rightarrow} {\kappa^{X}}\beta(x_0) =
    {\kappa^{X}}\sigma(x_0).  
  \end{displaymath}
  Passing to a subsequence we can choose the sequence $(x_n)$ so that
  \begin{displaymath}
    {\kappa^{X}}\sigma(x_n) \to {\kappa^{X}}\sigma(x_0),
    \quad \mu^{X}-a.s..    
  \end{displaymath}
  As $a\mapsto \rank a$ is a lower-semicontinuous function on
  matrices, it follows that
  \begin{displaymath}
    \liminf_n \rank {\kappa^{X}} \sigma(x_n) \geq \rank {\kappa^{X}}
    \sigma(x_0), \quad \mu^{X}-a.s..  
  \end{displaymath}
  Accounting for~\eqref{eq:6} we obtain that
  \begin{displaymath}
    F \subset D(x_0), \quad \mu^{X}-a.s.. 
  \end{displaymath}
  However, as $S(x_0)$ has the MRP, Lemma~\ref{lem:4} yields that
  $\mM{\mu^{X}}{D(x_0)}=0$ and the multi-dimensional version of the
  theorem follows.
    
  Assume now that $U$ is an open interval in $\mathbf{R}$ and that
  contrary to the assertion of the theorem the exception set $I$ is
  uncountable. Then there are $\epsilon >0$, a closed interval
  $[a,b] \subset U$, and a sequence $(x_n) \subset [a,b]$ such that
  \begin{displaymath}
    \mM{\mu^{X}}{D(x_n)}
    \geq \epsilon, \quad n\geq 1.  
  \end{displaymath}
  Since for every $(t,\omega)$ the function
  $x\mapsto \sigma_t(x)(\omega)$ is analytic, we deduce from
  Lemma~\ref{lem:5} that on every closed interval the integer-valued
  function $x\mapsto \rank(\kappa^X_t(\omega)\sigma_t(x)(\omega))$ has
  constant value except for a \emph{finite} number of points, where
  its values are smaller. Hence, if
  \begin{displaymath}
    \rank(\kappa^X_t(\omega)\sigma_t(x_{n'})(\omega)) <
    \rank(\kappa^X_t(\omega)) \text{ for countable } (n')\subset (n), 
  \end{displaymath}
  then
  \begin{displaymath}
    \rank(\kappa^X_t(\omega)\sigma_t(x)(\omega)) <
    \rank(\kappa^X_t(\omega)) \text{ for all } x\in U. 
  \end{displaymath}
  Accounting for~\eqref{eq:6} it follows that
  \begin{displaymath}
    \limsup_n D(x_n) \set \cap_n \cup_{m\geq n} D(x_m) = F 
  \end{displaymath}
  and thus
  \begin{displaymath}
    \mM{\mu^{X}}{F} \geq \limsup_n \mM{\mu^{X}}{D(x_n)} \geq \epsilon.
  \end{displaymath}  
  However, as we have already shown, $\mM{\mu^{X}}{F}=0$ and we arrive
  to a contradiction.
\end{proof}

\appendix

\section{Analytic fields of martingales and stochastic integrals}
\label{sec:analyt-fields-mart}

We denote by $\mathbf{D}^{\infty}([0,\infty),\mathbf{R}^d)$ the Banach
space of RCLL (right-continuous with left limits) functions
$\map{f}{[0,\infty)}{\mathbf{R}^d}$ equipped with the uniform norm:
$\norm{f}_{\infty} \set \sup_{t\geq 0} \abs{f(t)}$.

\begin{Theorem}
  \label{th:3}
  Let $U$ be an open connected set in $\mathbf{R}^l$ and
  $x\mapsto \xi(x)$ be an analytic map from ${U}$ to
  $\mathcal{L}_1(\mathbf{R}^d)$. Then there are modifications of the
  accompanying $d$-dimensional martingales
  \begin{displaymath}
    M_t(x) \set \cEP{t}{\xi(x)}, 
  \end{displaymath}
  such that for every $\omega\in \Omega$ the maps
  $x\mapsto M_{\cdot}(x)(\omega)$ taking values in
  $\mathbf{D}^{\infty}([0,\infty), \mathbf{R}^d)$ are analytic on $U$.

  If in addition, the MRP holds for a local martingale $X$ with values
  in $\mathbf{R}^m$, then there is a stochastic field
  $x\mapsto \sigma(x)$ of integrands for $X$ such that
  \begin{displaymath}
    M(x)  = M_0(x) + \sigma(x) \cdot X, 
  \end{displaymath}
  and for every $(t,\omega)\in [0,\infty)\times \Omega$ the function
  $x\mapsto \sigma_t(x)(\omega)$ taking values in $m\times d$-matrices
  is analytic on $U$.
\end{Theorem}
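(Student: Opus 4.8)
The plan is to build the analytic field of martingales in two stages. First I would handle the abstract construction of jointly-measurable, path-analytic modifications of the martingales $M_t(x)=\cEP{t}{\xi(x)}$; then I would invoke the MRP for $X$ to produce the integrand field $x\mapsto\sigma(x)$ and establish its pathwise analyticity.

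For the first stage, fix $y\in U$ and an $\epsilon$-neighborhood with the analytic expansion $\xi(x)=\sum_\alpha \xi_\alpha(y)(x-y)^\alpha$ converging in $\mathcal{L}_1(\mathbf{R}^d)$, where $\xi_\alpha(y)\in\mathcal{L}_1$ and (by standard properties of Banach-valued analytic maps) $\sum_\alpha \norm{\xi_\alpha(y)}_{\mathcal{L}_1} r^{|\alpha|}<\infty$ for some $r>\epsilon$. For each $\alpha$, let $M^{\alpha}_t(y)\set\cEP{t}{\xi_\alpha(y)}$ be a cadlag version; by Doob's inequality in $\mathcal{L}_1$ we only get an $\mathcal{H}_1$-type bound through a weak-$(1,1)$ estimate, so instead I would control tail probabilities: $\ProbP{\sup_t\abs{M^{\alpha}_t(y)}>\lambda}\leq \norm{\xi_\alpha(y)}_{\mathcal{L}_1}/\lambda$. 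This is enough to show that $\sum_\alpha M^{\alpha}_\cdot(y)(\omega)(x-y)^\alpha$ converges in $\mathbf{D}^\infty([0,\infty),\mathbf{R}^d)$ for a.e.\ $\omega$, uniformly for $x$ in a slightly smaller ball, via a Borel--Cantelli argument applied to the events $\{\sup_t\abs{M^\alpha_\cdot(y)}> \norm{\xi_\alpha(y)}_{\mathcal{L}_1}^{1/2}\}$ together with absolute summability of $\norm{\xi_\alpha(y)}_{\mathcal{L}_1}^{1/2}\rho^{|\alpha|}$ for $\epsilon<\rho<r$. Defining $\widetilde M_\cdot(x)(\omega)$ as this sum on the full-measure set and arbitrarily elsewhere gives, for each $\omega$, an analytic map $x\mapsto\widetilde M_\cdot(x)(\omega)$ into $\mathbf{D}^\infty$; one checks $\widetilde M_\cdot(x)$ is a modification of $M_\cdot(x)$ because conditional expectation is $\mathcal{L}_1$-continuous and the series matches termwise. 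A covering/patching argument over a countable family of balls exhausting $U$, using that two analytic modifications agreeing a.s.\ for each $x$ agree pathwise off a common null set (by continuity in $x$ and separability), yields a single global modification. The joint $\mathcal{P}\times\mathcal{B}$-measurability of $(t,\omega,x)\mapsto\widetilde M_t(x)(\omega)$ then follows from pathwise continuity in $x$ plus progressive measurability in $(t,\omega)$ for each fixed $x$.

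For the second stage, assume $X$ has the MRP. For each fixed $x$ there is an $X$-integrable $\sigma(x)$ with $M(x)=M_0(x)+\sigma(x)\cdot X$; replacing it by its minimal integrand $\kappa^{X\oplus}\kappa^X\sigma(x)$ (as in the discussion before Lemma~\ref{lem:4}) I may assume $\sigma(x)$ minimal, so $\abs{\sigma(x)}\leq\abs{\kappa^{X\oplus}}\abs{\kappa^X\sigma(x)}$ pointwise by~\eqref{eq:4}. The key identity is $\kappa^X\sigma(x)=\kappa^X\sigma(y)+\sum_{\alpha\neq 0}\kappa^X\sigma_\alpha(y)(x-y)^\alpha$ in $\mu^X$-measure, obtained by applying Lemma~\ref{lem:3} to the $\mathcal{H}_{1,loc}$-convergent partial sums of the martingale series (using Lemma~\ref{lem:7} to upgrade $\mathcal{L}_1$-convergence to $\mathcal{H}_{1,loc}$ along a subsequence, then a diagonal argument). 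Setting $\sigma_\alpha(y)$ to be the minimal integrand representing $M^\alpha(y)$, one has $\E{\mu^X}{\abs{\kappa^X\sigma_\alpha(y)}^2}=\EP{\abs{M^\alpha_\infty(y)-M^\alpha_0(y)}^2}$, which need not be finite since $\xi_\alpha(y)$ is only in $\mathcal{L}_1$; the remedy is to work $\mu^X$-a.e.\ on the predictable sets $\{\abs{\kappa^{X\oplus}}\leq c\}$ and use that the $\mathbf{D}^\infty$-convergence of the path series already forces $\mu^X$-a.e.\ convergence of the increments, hence of $\kappa^X\sigma_\alpha(y)(x-y)^\alpha$ partial sums, with a pathwise majorant. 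This gives, for $\mu^X$-a.e.\ $(t,\omega)$, convergence of $\sum_\alpha \kappa^X_t(\omega)\sigma_{\alpha,t}(y)(\omega)(x-y)^\alpha$ in a neighborhood of $y$, and then, dividing out by $\abs{\kappa^{X\oplus}_t(\omega)}$-free minimal representatives, pathwise analyticity of $x\mapsto\sigma_t(x)(\omega)$ on all of $U$ after the same countable patching as before. Finally, I would absorb the $R^*_{t-}(x)/Y_{t-}(x)$ factor as in the proof of Theorem~\ref{th:2}: $Y(x)$ is a strictly positive martingale analytic in $x$, so $1/Y_{t-}(x)$ is pathwise analytic off the null set where $Y$ hits $0$, and products and sums of pathwise-analytic matrix fields are pathwise analytic.

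The main obstacle is the $\mathcal{L}_1$-only integrability: neither Doob's $\mathcal{L}_p$ inequality nor square-integrability of the representing integrands is available, so every place where one would like an $\mathcal{H}_1$ or $\mathcal{L}_2$ bound must instead be run through weak-type $(1,1)$ tail estimates and Borel--Cantelli, and the identification $\kappa^X\sigma_\alpha(y)$ with the increments of $M^\alpha(y)$ has to be done locally on $\{\abs{\kappa^{X\oplus}}\leq c\}$ rather than globally. Everything else — the patching over a countable cover, the measurability, the handling of the $1/Y_-$ factor — is routine once the analytic path modifications are in hand.
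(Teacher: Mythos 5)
Your first stage is essentially sound and is a legitimate variant of the paper's argument: where you run a weak-type $(1,1)$ maximal estimate plus Borel--Cantelli with thresholds $\sNorm{\zeta_\alpha}{\mathcal{L}_1}^{1/2}$, the paper instead dominates all the $L^\alpha=\cEP{\cdot}{\zeta_\alpha}$ by the single martingale $L_t=\cEP{t}{\sum_\alpha 2^{\abs{\alpha}}\abs{\zeta_\alpha}}$ and stops it (Lemma~\ref{lem:6}), obtaining $\sum_\alpha\sNorm{L^{\alpha,\tau_m}}{\mathcal{H}_1}\epsilon^{\abs{\alpha}}<\infty$. The difference matters later: the localization produces genuine $\mathcal{H}_1$-summability of the stopped martingales, which your tail-probability route does not, and that summability is exactly the input needed for the integrand stage. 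Two smaller omissions: you never reduce to a \emph{bounded} representing martingale $X$ (Proposition~2 of the cited Kramkov--S\^irbu paper), which is what makes $\kappa^X$, $\mu^X$ and the jump estimates available at all; and the closing discussion of the factor $R^*_{t-}(x)/Y_{t-}(x)$ belongs to the proof of Theorem~\ref{th:2}, not here --- Theorem~\ref{th:3} involves no change of measure.

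The genuine gap is in the second stage, at precisely the point you flag as the ``remedy.'' You need a single $\mu^{X}$-null set off which the power series $\sum_\alpha\gamma^\alpha_t(\omega)(x-y)^\alpha$ of minimal integrands converges absolutely for all $x$ in a fixed neighborhood of $y$; only then is $x\mapsto\sigma_t(x)(\omega)$ analytic for fixed $(t,\omega)$. Your claim that ``the $\mathbf{D}^\infty$-convergence of the path series already forces $\mu^X$-a.e.\ convergence of the increments, hence of the $\kappa^X\sigma_\alpha(y)(x-y)^\alpha$ partial sums, with a pathwise majorant'' is not a valid implication: uniform convergence of the paths of a series of stochastic integrals does not control the integrands pointwise in $(t,\omega)$ (the map from a martingale to its integrand is not continuous from uniform path convergence to $\mu^X$-a.e.\ convergence), and Lemmas~\ref{lem:3} and~\ref{lem:7} only give convergence \emph{in $\mu^X$-measure for each fixed $x$}, which cannot be upgraded to a common null set for uncountably many $x$ without a summability estimate on the coefficients. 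Restricting to $\braces{\abs{{\kappa^X}^\oplus}\leq c}$ does not help, since the obstruction is summability over $\alpha$, not the size of the pseudo-inverse. The paper closes this gap with Lemma~\ref{lem:8}: for a \emph{bounded} $X$ and minimal integrands with $\sum_\alpha\sNorm{\gamma^\alpha\cdot X}{\mathcal{H}_1}<\infty$, the increasing process $\sum_\alpha\qvar{\gamma^\alpha\cdot X}$ is locally integrable (boundedness of $X$ controls its jumps), whence $\sum_\alpha\abs{\gamma^\alpha_t(\omega)}^2\epsilon^{2\abs{\alpha}}<\infty$ for $\mu^X$-a.e.\ $(t,\omega)$; after setting the $\gamma^\alpha$ to zero on the exceptional set one gets an honest radius of convergence at every $(t,\omega)$, and Lemma~\ref{lem:9} identifies $\sigma(x)\cdot X$ with $M(x)-M_0(x)$. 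Your proposal contains no working substitute for this lemma, and without it the pathwise analyticity of $x\mapsto\sigma_t(x)(\omega)$ --- the heart of the second assertion --- is not established.
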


The proof of the theorem is divided into a series of lemmas. For a
multi-index $\alpha = (\alpha_1,\dots,\alpha_l) \in \mathbf{Z}^l_+$ we
denote
\begin{displaymath}
  \abs{\alpha} \set \alpha_1+\dots+\alpha_l. 
\end{displaymath}
The space $\mathcal{H}_1$ has been introduced just before
Lemma~\ref{lem:3}.

\begin{Lemma}
  \label{lem:6}
  Let $(M^\alpha)_{\alpha \in \mathbf{Z}_+^l}$ be uniformly integrable
  martingales with values in $\mathbf{R}^d$ such that
  \begin{displaymath}
    \sum_{\alpha} 2^{\abs{\alpha}}\sNorm{M^\alpha}{\mathcal{L}_1} < \infty. 
  \end{displaymath}
  Then there is an increasing sequence $(\tau_m)$ of stopping times
  such that $\braces{\tau_m=\infty}\uparrow \Omega$ and
  \begin{displaymath}
    \sum_{\alpha} \sNorm{M^{\alpha,\tau_m}}{\mathcal{H}_1} <
    \infty, \quad m\geq 1. 
  \end{displaymath}
\end{Lemma}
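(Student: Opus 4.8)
The plan is to dominate all the $M^\alpha$ simultaneously by a single nonnegative uniformly integrable martingale and then to localize at the level sets of that martingale. The weight $2^{\abs{\alpha}}$ in the hypothesis is exactly what makes the dominating martingale integrable, while still leaving enough slack for the \emph{un}weighted conclusion.

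\textbf{Step 1 (a dominating martingale).} Working with càdlàg versions of the $M^\alpha$ (available under the usual conditions), set $\Lambda \set \sum_\alpha 2^{\abs{\alpha}}\abs{M^\alpha_\infty}$. By Tonelli's theorem $\EP{\Lambda} = \sum_\alpha 2^{\abs{\alpha}}\sNorm{M^\alpha}{\mathcal{L}_1} < \infty$, so $\Lambda\in\mathcal{L}_1(\mathbf{R})$ and the series converges a.s. Let $L$ be the càdlàg version of the uniformly integrable martingale $L_t \set \cEP{t}{\Lambda}$. For fixed $t$, conditional Jensen gives $2^{\abs{\alpha}}\abs{M^\alpha_t} = 2^{\abs{\alpha}}\abs{\cEP{t}{M^\alpha_\infty}} \leq \cEP{t}{2^{\abs{\alpha}}\abs{M^\alpha_\infty}} \leq \cEP{t}{\Lambda} = L_t$ a.s.; since both sides are càdlàg, for each $\alpha$ the inequality $\abs{M^\alpha_t} \leq 2^{-\abs{\alpha}}L_t$ holds for all $t\geq 0$ a.s., and intersecting over the countably many $\alpha$ it holds simultaneously.

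\textbf{Step 2 (localizing times).} Put $\tau_m \set \inf\braces{t\geq 0 : L_t\geq m}$; these are stopping times, increasing in $m$. By Doob's maximal inequality $\sup_{t\geq0}L_t<\infty$ a.s., so for a.e.\ $\omega$ there is $m$ with $L_t(\omega)<m$ for all $t$, i.e.\ $\tau_m(\omega)=\infty$; hence $\braces{\tau_m=\infty}\uparrow\Omega$.

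\textbf{Step 3 (the bound).} Fix $m$. On $\braces{t<\tau_m}$ one has $L_t<m$, so the stopped process obeys $\sup_{t\geq0}L^{\tau_m}_t \leq m + L_{\tau_m}\ind{\tau_m<\infty} \leq m + L_{\tau_m}$. Optional sampling for the uniformly integrable martingale $L$ gives $\EP{L_{\tau_m}}=\EP{\Lambda}$, hence $\EP{\sup_{t\geq0}L^{\tau_m}_t}\leq m+\EP{\Lambda}$. Since the inequality of Step 1 holds for every $t$, it holds with $t$ replaced by $t\wedge\tau_m$, so $\sup_{t\geq0}\abs{M^{\alpha,\tau_m}_t} \leq 2^{-\abs{\alpha}}\sup_{t\geq0}L^{\tau_m}_t$ and therefore $\sNorm{M^{\alpha,\tau_m}}{\mathcal{H}_1} \leq 2^{-\abs{\alpha}}(m+\EP{\Lambda})$. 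Summing over $\alpha\in\mathbf{Z}^l_+$ and using $\sum_\alpha 2^{-\abs{\alpha}} = \bigl(\sum_{k\geq0}2^{-k}\bigr)^l = 2^l$ gives $\sum_\alpha\sNorm{M^{\alpha,\tau_m}}{\mathcal{H}_1} \leq 2^l(m+\EP{\Lambda})<\infty$, which is the claim.

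The argument is essentially routine; the only subtlety is that stopping $L$ at $\tau_m$ picks up the extra term $L_{\tau_m}$ from a possible jump across level $m$, which is why one invokes optional sampling rather than a (nonexistent) $\mathcal{L}_1$ maximal inequality. I do not anticipate any genuine obstacle beyond this bookkeeping.
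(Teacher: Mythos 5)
Your proof is correct and follows essentially the same route as the paper's: the same dominating martingale $L_t=\cEP{t}{\sum_\alpha 2^{\abs{\alpha}}\abs{M^\alpha_\infty}}$, the same level-crossing stopping times $\tau_m$, and the same bound $\sup_{t}L^{\tau_m}_t\leq m+L_{\tau_m}$ combined with $\sum_\alpha 2^{-\abs{\alpha}}<\infty$. The only difference is that you spell out the routine verifications (Tonelli, conditional Jensen, optional sampling) that the paper leaves implicit.
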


\begin{proof}
  We define the martingale
  \begin{align*}
    L_t \set \cEP{t}{\sum_\alpha 2^{\abs{\alpha}}
    \abs{M^\alpha_\infty}}, \; t\geq 0,
  \end{align*}
  and stopping times
  \begin{displaymath}
    \tau_m \set \inf\descr{t\geq 0}{L_t \geq m}, \; m\geq 1. 
  \end{displaymath}
  Clearly, $\braces{\tau_m=\infty} \uparrow \Omega$ as $m\to \infty$
  and $\abs{M^\alpha} \leq 2^{-\abs{\alpha}} L$. Moreover,
  \begin{displaymath}
    \sNorm{L^{\tau_m}}{\mathcal{H}_1} = \EP{\sup_{0\leq t\leq \tau_m}
      L_t} \leq m + \EP{L_{\tau_m}} = m + L_0 < \infty. 
  \end{displaymath}
  It follows that
  \begin{displaymath}
    \sum_\alpha \sNorm{M^{\alpha,\tau_m}}{\mathcal{H}_1}
    \leq \sNorm{L^{\tau_m}}{\mathcal{H}_1}
    \sum_\alpha 2^{-\abs{\alpha}} < \infty.
  \end{displaymath}
\end{proof}

\begin{Lemma}\label{lem:7}
  Let $(M^n)$ and $M$ be uniformly integrable martingales such that
  $M^n \to M$ in $\mathcal{L}_1$. Then there exists a subsequence of
  $(M^n)$ that converges to $M$ in $\mathcal{H}_{1,loc}$.
\end{Lemma}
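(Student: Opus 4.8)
The plan is to reduce everything to Lemma~\ref{lem:6}. Since $M^n\to M$ in $\mathcal{L}_1$, I would first pass to a subsequence, still denoted $(M^n)$, along which $\sNorm{M^n-M}{\mathcal{L}_1}\le 4^{-n}$ for every $n\ge 1$. Put $N^n\set M^n-M$; each $N^n$ is again a uniformly integrable martingale, being the difference of two such, and
\begin{displaymath}
  \sum_{n\ge 1} 2^n \sNorm{N^n}{\mathcal{L}_1} \le \sum_{n\ge 1} 2^{-n} < \infty.
\end{displaymath}

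Next I would apply Lemma~\ref{lem:6} with $l=1$, so that the multi-index $\alpha$ ranges over $\mathbf{Z}_+$ and $M^\alpha\set N^\alpha$. This produces an increasing sequence $(\tau_m)$ of stopping times with $\braces{\tau_m=\infty}\uparrow\Omega$ and $\sum_{n\ge1}\sNorm{N^{n,\tau_m}}{\mathcal{H}_1}<\infty$ for every $m$. Because $(\tau_m)$ is increasing and $\braces{\tau_m=\infty}\uparrow\Omega$, for $\mathbb{P}$-a.e.\ $\omega$ there is an index $m_0(\omega)$ with $\tau_m(\omega)=\infty$ for all $m\ge m_0(\omega)$; hence $\tau_m\uparrow\infty$ almost surely, as required in the definition of $\mathcal{H}_{1,loc}$-convergence.

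Finally, fix $m$. The summability $\sum_{n\ge1}\sNorm{N^{n,\tau_m}}{\mathcal{H}_1}<\infty$ forces $\sNorm{N^{n,\tau_m}}{\mathcal{H}_1}\to 0$ as $n\to\infty$, that is, $M^{n,\tau_m}\to M^{\tau_m}$ in $\mathcal{H}_1$. Since the one sequence $(\tau_m)$ works simultaneously for all $m$, the subsequence $(M^n)$ converges to $M$ in $\mathcal{H}_{1,loc}$ by definition. There is no genuine obstacle here; the only points deserving a word of care are verifying that the hypotheses of Lemma~\ref{lem:6} are met by a one-dimensional index set and observing that $\braces{\tau_m=\infty}\uparrow\Omega$ already implies $\tau_m\uparrow\infty$ a.s.
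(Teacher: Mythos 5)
Your proof is correct and follows essentially the same route as the paper: pass to a rapidly convergent subsequence and invoke Lemma~\ref{lem:6}. The only (harmless, arguably cleaner) difference is that you apply Lemma~\ref{lem:6} to the differences $M^n-M$ directly, whereas the paper applies it to the consecutive differences $M^{n_{k+1}}-M^{n_k}$ and implicitly identifies the stopped $\mathcal{H}_1$-limit of the telescoping sums with $M^{\tau_m}$.
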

\begin{proof}
  Since $M^n \to M$ in $\mathcal{L}_1$ there exists a subsequence
  $(M^{n_k})$ such that
  \begin{displaymath}
    \sum_{k=1}^\infty \sNorm{M^{n_{k+1}}-M^{n_k}}{\mathcal{L}_1} 2^k  < \infty.
  \end{displaymath}
  Lemma~\ref{lem:6} implies that $M^{n_k}\to M$ in
  $\mathcal{H}_{1,loc}$.
\end{proof}

Let $X$ be a square integrable martingale taking values in
$\mathbf{R}^m$. As in Section~\ref{sec:analyticity-mr} we associate
with $X$ the increasing predictable process $A^X\set \trace \pvar{X}$,
the $\mathcal{S}^m_+$-valued predictable process $\kappa^{X}$ such
that $\pvar{X} = (\kappa^{X})^2 \cdot A^X$, and a finite measure
$\mu^{X}(dt,d\omega) \set dA^X_t(\omega) \ProbP{d\omega}$ on the
predictable $\sigma$-algebra $\mathcal{P}$ of
$[0,\infty) \times \Omega$.  We recall that an integrand $\gamma$ for
$X$ is \emph{minimal} if
\begin{equation}
  \label{eq:7}
  \gamma = {\kappa^X}^{\oplus} {\kappa^X} \gamma. 
\end{equation}

\begin{Lemma}
  \label{lem:8}
  Let $X$ be a bounded martingale with values in $\mathbf{R}^m$ and
  $(\gamma^\alpha)_{\alpha\in \mathbf{Z}^l_+}$ be minimal integrands
  for $X$ such that
  \begin{equation}
    \label{eq:8}
    \sum_\alpha \sNorm{\gamma^\alpha\cdot X}{\mathcal{H}^1} <
    \infty. 
  \end{equation}
  Then
  \begin{equation}
    \label{eq:9}
    \sum_\alpha \abs{\gamma^\alpha}^2 =  \sum_\alpha
    \abs{\gamma^\alpha_t(\omega)}^2 < \infty, \quad 
    \mu^{X}-a.s.. 
  \end{equation}

\end{Lemma}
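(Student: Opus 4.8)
\emph{Proof proposal.} The plan is: (i) reduce to uniformly bounded integrands; (ii) rewrite the claim in terms of the predictable quadratic variations $\pvar{\gamma^\alpha\cdot X}$ and the measure $\mu^{X}$; (iii) deduce it from the a.s. finiteness of $\sum_\alpha\pvar{\gamma^\alpha\cdot X}_\infty$; and (iv) establish that finiteness by passing from the optional to the predictable quadratic variation with a Lenglart–Lépingle–Pratelli inequality, using that after (i) all jumps are uniformly bounded. I expect step (iv) to be the crux.

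\emph{Step 1: reduction to $|\gamma^\alpha|\le 1$.} Put $\beta^\alpha\set\gamma^\alpha/(1+|\gamma^\alpha|)$. Exactly as in the proof of Lemma~\ref{lem:3}, each $\beta^\alpha$ is again a minimal integrand for $X$, now with $|\beta^\alpha|\le 1$, and $\qvar{\beta^\alpha\cdot X}\le\qvar{\gamma^\alpha\cdot X}$ pointwise; so by Davis' inequality $\sum_\alpha\sNorm{\beta^\alpha\cdot X}{\mathcal{H}_1}\le C\sum_\alpha\sNorm{\gamma^\alpha\cdot X}{\mathcal{H}_1}<\infty$ by~\eqref{eq:8}. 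Since $|\gamma^\alpha_t(\omega)|<\infty$ for every $(t,\omega)$, and since $\sum_\alpha|\beta^\alpha|^2<\infty$ forces $|\gamma^\alpha|\to 0$ and hence $|\gamma^\alpha|\le 2|\beta^\alpha|$ for all but finitely many indices, the bound~\eqref{eq:9} for the family $(\beta^\alpha)$ implies it for $(\gamma^\alpha)$. Hence I may assume $|\gamma^\alpha|\le 1$ for all $\alpha$; set $M^\alpha\set\gamma^\alpha\cdot X$.

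\emph{Steps 2--3: reformulation and reduction to a summability statement.} With $|\gamma^\alpha|\le 1$ and $X$ bounded one has $(\gamma^\alpha)^*d\qvar{X}\gamma^\alpha\le dA^X$, so $M^\alpha$ is square integrable with $\pvar{M^\alpha}=\int_0^\cdot|{\kappa^{X}}\gamma^\alpha|^2\,dA^X$. By Tonelli,
\begin{displaymath}
  \sum_\alpha\pvar{M^\alpha}_\infty=\int_0^\infty\Bigl(\sum_\alpha|{\kappa^{X}}\gamma^\alpha|^2\Bigr)dA^X ,
\end{displaymath}
so if the left-hand side is a.s. finite then $\sum_\alpha|{\kappa^{X}}\gamma^\alpha|^2<\infty$ $\mu^{X}$-a.s., and then~\eqref{eq:4} (minimality of $\gamma^\alpha$) together with the finiteness of ${\kappa^{X}}^\oplus$ gives $\sum_\alpha|\gamma^\alpha|^2\le|{\kappa^{X}}^\oplus|^2\sum_\alpha|{\kappa^{X}}\gamma^\alpha|^2<\infty$ $\mu^{X}$-a.s., which is~\eqref{eq:9}. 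So it remains to show $\sum_\alpha\pvar{M^\alpha}_\infty<\infty$ a.s.

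\emph{Step 4: from optional to predictable quadratic variation.} By Davis' inequality and~\eqref{eq:8}, $\mathbb{E}\bigl[\sum_\alpha\qvar{M^\alpha}_\infty^{1/2}\bigr]<\infty$; hence $V\set\sum_\alpha\qvar{M^\alpha}$ is an increasing, locally integrable process with $\mathbb{E}[V_\infty^{1/2}]<\infty$, and $S_\infty\set\sum_\alpha\sup_t|M^\alpha_t|$ satisfies $\mathbb{E}[S_\infty]=\sum_\alpha\sNorm{M^\alpha}{\mathcal{H}_1}<\infty$. Because $X$ is bounded and $|\gamma^\alpha|\le 1$, each $M^\alpha$ has jumps bounded by $2c\set 2\sNorm{X}{\mathcal{L}_\infty}$, so $(\Delta M^\alpha)^2\le 2c\,|\Delta M^\alpha|\le 4c\sup_t|M^\alpha_t|$ and therefore $\Delta V\le 4c\,S_\infty$ at every time. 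Let $W\set\sum_\alpha\pvar{M^\alpha}$, which is the dual predictable projection of $V$; then $\sup_t\Delta W_t\le 4c\,\sup_t\mathbb{E}[S_\infty\mid\mathcal{F}_t]=:4c\,S^*$ and $\mathbb{E}[(S^*)^{1/2}]\le 1+\mathbb{E}[S_\infty]<\infty$ by Doob's weak $(1,1)$ inequality. A Lenglart–Lépingle–Pratelli inequality then gives $\mathbb{E}[W_\infty^{1/2}]\le C\bigl(\mathbb{E}[V_\infty^{1/2}]+\mathbb{E}[(\sup_t\Delta W_t)^{1/2}]\bigr)<\infty$, so $W_\infty=\sum_\alpha\pvar{M^\alpha}_\infty<\infty$ a.s., as required.

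\emph{Where the difficulty lies.} The $\mathcal{H}_1$-hypothesis~\eqref{eq:8} controls, via Davis, only the \emph{optional} quadratic variations $\qvar{M^\alpha}$, and a summable family of $\mathcal{H}_1$-martingales can have $\sum_\alpha\pvar{M^\alpha}_\infty=\infty$; likewise the obvious Lenglart bound carries a jump-correction term that is not summable over $\alpha$. The point that makes Step~4 go through is that, once the integrands are bounded (Step~1), \emph{all} jumps $\Delta M^\alpha$ are bounded by the same constant $2c$, so one may trade $(\Delta M^\alpha)^2$ for $|\Delta M^\alpha|$, turning the non-summable quantity $\sum_\alpha(\Delta M^\alpha_\tau)^2$ into $2c\sum_\alpha|\Delta M^\alpha_\tau|\le 4c\,S_\infty$, which is integrable; this keeps the jumps of the compensator $W$ under control and lets Lenglart's estimate close.
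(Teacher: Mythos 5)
Your Steps 1--3 are sound and coincide with the paper's reductions: pass to $\beta^\alpha=\gamma^\alpha/(1+\abs{\gamma^\alpha})$, use Davis' inequality to turn~\eqref{eq:8} into $\sum_\alpha\EP{\qvar{\gamma^\alpha\cdot X}_\infty^{1/2}}<\infty$, and observe via~\eqref{eq:4} that it suffices to prove $\sum_\alpha\abs{\kappa^X\gamma^\alpha}^2<\infty$ $\mu^X$-a.s. The gap is the final inequality of Step~4. The estimate you invoke, $\EP{W_\infty^{1/2}}\le C\bigl(\EP{V_\infty^{1/2}}+\EP{(\sup_t\Delta W_t)^{1/2}}\bigr)$ for an increasing process $V$ and its compensator $W$, is not a theorem: for $0<p<1$ the Lenglart--L\'epingle--Pratelli inequalities only run in the direction $\EP{V_\infty^{p}}\le c_p\EP{W_\infty^{p}}$ (Lenglart domination by a \emph{predictable} process), and the reverse fails even when $W$ has bounded jumps. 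For instance, with $(\eta_k)_{k\le n}$ i.i.d.\ Bernoulli$(\epsilon)$, $\epsilon=n^{-2}$, and $V_t=\epsilon^{-1}\sum_{k\le t\wedge n}\eta_k$, one gets $W_t=\lfloor t\rfloor\wedge n$, $\sup_t\Delta W_t=1$, $\EP{V_\infty^{1/2}}\le 1$, yet $\EP{W_\infty^{1/2}}=\sqrt{n}$. So no global $\mathcal{L}_{1/2}$ bound on $W_\infty$ can be extracted this way; moreover your intermediate target $W_\infty<\infty$ a.s.\ is stronger than what~\eqref{eq:9} actually requires, which is only the $\mu^X$-a.e.\ finiteness of the density $\sum_\alpha\abs{\kappa^X\gamma^\alpha}^2$.

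The repair is elementary and uses exactly the two facts you have already established: $V_\infty<\infty$ a.s.\ (since $V_\infty\le(\sum_\alpha\qvar{\gamma^\alpha\cdot X}_\infty^{1/2})^2$) and $\EP{\sup_t\Delta V_t}<\infty$ (from the bounded-jump trade $(\Delta M^\alpha)^2\le 2c\abs{\Delta M^\alpha}$). Together these say that $V$ is \emph{locally integrable}: with $\sigma_n\set\inf\descr{t}{V_t\geq n}$ one has $V_{\sigma_n}\le n+\sup_t\Delta V_t\in\mathcal{L}_1$ and $\sigma_n\uparrow\infty$ because $V_\infty<\infty$. Hence for each $n$, $\EP{V_{\sigma_n}}=\EP{W_{\sigma_n}}=\E{\mu^X}{\sum_\alpha\abs{\kappa^X\gamma^\alpha}^2\setind{[0,\sigma_n]}}<\infty$, so $\sum_\alpha\abs{\kappa^X\gamma^\alpha}^2<\infty$ $\mu^X$-a.e.\ on $[0,\sigma_n]$; letting $n\to\infty$ exhausts $[0,\infty)\times\Omega$ and~\eqref{eq:4} then gives~\eqref{eq:9}. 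This localization is precisely the paper's argument, so your proof is correct up to replacing the invalid moment inequality by the local integrability of $V$ and a stopped Tonelli computation.
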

\begin{proof}
  By Davis' inequality, \eqref{eq:8} is equivalent to
  \begin{displaymath}
    \sum_\alpha \EP{\left[\gamma^\alpha\cdot X\right]_\infty^{1/2}} <
    \infty. 
  \end{displaymath}
  By replacing if necessary $\gamma^\alpha$ with
  $\frac1{1+\abs{\gamma^\alpha}}\gamma^\alpha$, we can assume without
  a loss of generality that $\abs{\gamma^\alpha}\leq 1$. Let us show
  that in this case the increasing optional process
  \begin{displaymath}
    B_t \set \sum_\alpha \left[\gamma^\alpha\cdot
      X\right]_t, \quad t\geq 0, 
  \end{displaymath}
  is locally integrable. Since
  \begin{displaymath}
    B_\infty = \sum_\alpha \left[\gamma^\alpha\cdot
      X\right]_\infty \leq \Bigl(\sum_\alpha \left[\gamma^\alpha\cdot
      X\right]_\infty^{1/2}\Bigr)^2 < 
    \infty,  
  \end{displaymath}
  we only need to check that the positive jump process $\Delta B$ is
  locally integrable. Actually, we shall show that
  $\sup_{t\geq 0} \Delta B_t$ is integrable. Indeed, as $X$ is
  bounded, there is a constant $c>0$ such that
  $\abs{(\gamma^\alpha)^* \Delta X} \leq c$. Hence,
  \begin{displaymath}
    \sup_{t\geq 0}\Delta B_t \leq  \sum_\alpha ((\gamma^\alpha)^*\Delta X)^2 \leq
    c\sum_{\alpha} \abs{(\gamma^\alpha)^*{\Delta X}} \leq  c\sum_\alpha
    \left[\gamma^\alpha\cdot X\right]_\infty^{1/2}, 
  \end{displaymath}
  where the right-hand side has finite expected value.

  Since for every stopping time $\tau$
  \begin{displaymath}
    \EP{B_\tau} = \sum_\alpha \EP{\left[\gamma^\alpha\cdot X\right]_\tau}
    = \sum_\alpha \EP{\int_0^\tau
      \abs{{\kappa^X}\gamma^{\alpha}}^2 dA^X}, 
  \end{displaymath}
  the local integrability of $B$ yields the existence of stopping
  times $(\tau^m)$ such that $\tau_m \uparrow \infty$ and
  \begin{displaymath}
    \sum_\alpha \EP{\int_0^{\tau_m}
      \abs{{\kappa^X}\gamma^\alpha}^2  dA^X} = \sum_\alpha
    \mE{\mu^X}{\abs{{\kappa^X}\gamma^\alpha}^2 \setind{[0,\tau^m]}}
    <\infty.  
  \end{displaymath}
  It follows that
  \begin{displaymath}
    \sum_\alpha \abs{{\kappa^X}\gamma^\alpha}^2 < \infty, \quad
    \mu^X-a.s..  
  \end{displaymath}
  This convergence implies~\eqref{eq:9} in view of
  inequalities~\eqref{eq:4} for minimal integrands.
\end{proof}

\begin{Lemma}
  \label{lem:9}
  Let $X$ be a square integrable martingale taking values in
  $\mathbf{R}^m$ and $(\gamma^n)$ be minimal integrands for $X$ such
  that $(M^n \set \gamma^n\cdot X)$ are uniformly integrable
  martingales. Suppose that there are a uniformly integrable
  martingale $M$ and a predictable process $\gamma$ such that
  $M^n \to M$ in $\mathcal{L}_1$ and
  $\gamma^n_t(\omega) \to \gamma_t(\omega)$ for every $(t,\omega)$.
  Then $\gamma$ is a minimal integrand for $X$ and $M=\gamma\cdot X$.
\end{Lemma}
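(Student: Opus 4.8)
The plan is to reduce, by means of the localization Lemmas~\ref{lem:6} and~\ref{lem:7}, to a situation governed by Lemma~\ref{lem:8}, to extract from it a \emph{quantitative} $\mu^{X}$-almost sure rate of convergence of the integrands, and then to pass to the limit inside the stochastic integral. The minimality part is immediate: since each $\gamma^n$ is a minimal integrand, $\gamma^n = {\kappa^{X}}^{\oplus}{\kappa^{X}}\gamma^n$ for \emph{every} $(t,\omega)$, so letting $n\to\infty$ and using continuity of matrix multiplication gives $\gamma = {\kappa^{X}}^{\oplus}{\kappa^{X}}\gamma$ for every $(t,\omega)$. Hence, once $\gamma$ is known to be $X$-integrable it is automatically a minimal integrand, and it then only remains to identify $\gamma\cdot X$ with $M$.

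First I would apply Lemma~\ref{lem:7} to pass to a subsequence with $M^n\to M$ in $\mathcal{H}_{1,loc}$, and, repeating the extraction in its proof, arrange that $\sum_n 4^{n}\sNorm{M^{n+1}-M^n}{\mathcal{L}_1}<\infty$; the argument of Lemma~\ref{lem:6} with the weight $2^{\abs{\alpha}}$ replaced by $4^{\abs{\alpha}}$, applied to the sequence $(M^{n+1}-M^n)_n$, then yields stopping times $\tau_m\uparrow\infty$ with $\braces{\tau_m=\infty}\uparrow\Omega$ and $\sum_n 2^{n/2}\sNorm{(M^{n+1}-M^n)^{\tau_m}}{\mathcal{H}_1}<\infty$ for every $m$. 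Writing $\delta^n\set\gamma^{n+1}-\gamma^n$, each $\delta^n$ is a minimal integrand by linearity of~\eqref{eq:7}, and $\delta^n\cdot X=M^{n+1}-M^n$; we may moreover assume $X$ bounded, which is the setting of Lemma~\ref{lem:8}. Fixing $m$ and applying Lemma~\ref{lem:8} to $X^{\tau_m}$ and the minimal integrands $\bigl(2^{n/2}\delta^n\setind{[0,\tau_m]}\bigr)_n$ gives $C\set\sum_n 2^{n}\abs{\delta^n}^2<\infty$ $\mu^{X}$-a.s.\ on $[0,\tau_m]$. Since $\gamma-\gamma^n=\sum_{k\geq n}\delta^k$ pointwise, Cauchy--Schwarz yields
\begin{displaymath}
  \abs{\gamma-\gamma^n}=\abs{\sum_{k\geq n}\delta^k}\leq\Bigl(\sum_{k\geq n}2^{-k}\Bigr)^{1/2}\Bigl(\sum_{k\geq n}2^{k}\abs{\delta^k}^2\Bigr)^{1/2}\leq 2^{(1-n)/2}\sqrt{C},\qquad\mu^{X}\text{-a.s. on }[0,\tau_m].
\end{displaymath}
In particular $\gamma^n\to\gamma$ $\mu^{X}$-a.s.\ on $[0,\tau_m]$, with the $\mu^{X}$-a.s.\ finite predictable process $G\set\abs{\gamma^{1}}+3\sqrt{C}$ dominating $\sup_n\abs{\gamma^n}$ and hence also $\abs{\gamma}$.

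To conclude, on $[0,\tau_m]$ the minimal integrands $\gamma^n$ converge $\mu^{X}$-a.s.\ to $\gamma$ and are dominated by the $\mu^{X}$-a.s.\ finite process $G$, while $\gamma^n\cdot X^{\tau_m}=M^{n,\tau_m}\to M^{\tau_m}$ in $\mathcal{H}_1$. By the dominated convergence theorem for stochastic integrals — the admissibility of the dominating process $G$ being verified from minimality and the summability of $\sNorm{2^{n/2}\delta^n\cdot X^{\tau_m}}{\mathcal{H}_1}$, or equivalently by a Fatou estimate on $\qvar{(\gamma^n-\gamma)\cdot X^{\tau_m}}$ via~\eqref{eq:4} — it follows that $\gamma$ is $X^{\tau_m}$-integrable and $\gamma^n\cdot X^{\tau_m}\to\gamma\cdot X^{\tau_m}$, whence $M^{\tau_m}=\gamma\cdot X^{\tau_m}$. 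Letting $m\to\infty$ and using $\braces{\tau_m=\infty}\uparrow\Omega$ shows $\gamma$ is $X$-integrable with $M=\gamma\cdot X$; combined with the first paragraph, $\gamma$ is a minimal integrand.

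The genuine difficulty is concentrated in the last step: passing to the limit inside the stochastic integral \emph{before} $X$-integrability of $\gamma$ is available, which is a closedness property of the set of stochastic integrals of $X$. The subsequence is chosen geometrically precisely so that Lemma~\ref{lem:8} delivers an honest dominating integrand rather than mere convergence; checking that this dominating process is admissible for the stochastic dominated convergence theorem (or running the Fatou estimate) is where care is needed, as is the passage to a bounded $X$ that Lemma~\ref{lem:8} requires. A cleaner but less self-contained route would instead invoke the closedness of the space of stochastic integrals of $X$ in the semimartingale topology to write $M=\theta\cdot X$ for some $X$-integrable integrand $\theta$, which without loss may be taken minimal; then $(\gamma^n-\theta)\cdot X=M^n-M\to0$ in $\mathcal{H}_{1,loc}$ along a subsequence, so Lemma~\ref{lem:3} gives $\gamma^n-\theta\to0$ in $\mu^{X}$-measure, and comparison with the pointwise limit forces $\gamma=\theta$ $\mu^{X}$-a.s., whence $\gamma\cdot X=\theta\cdot X=M$.
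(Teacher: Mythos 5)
Your closing remark --- the ``cleaner but less self-contained route'' --- is in fact the paper's own proof, almost verbatim: after Lemma~\ref{lem:7} reduces to $M^n\to M$ in $\mathcal{H}_{1,loc}$, one invokes the closedness of the space of stochastic integrals of $X$ under this convergence to write $M=\widetilde\gamma\cdot X$, applies Lemma~\ref{lem:3} to $(\gamma^n-\widetilde\gamma)\cdot X\to 0$ to get $\kappa^{X}(\gamma^n-\widetilde\gamma)\overset{\mu^{X}}{\rightarrow}0$, compares with the pointwise limit to conclude $\kappa^{X}(\gamma-\widetilde\gamma)=0$ $\mu^{X}$-a.s., and finishes with Lemma~\ref{lem:2}, which simultaneously delivers the $X$-integrability of $\gamma$ and the identity $\gamma\cdot X=\widetilde\gamma\cdot X=M$ (one does not even need $\widetilde\gamma$ minimal for this). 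You should lead with that argument rather than relegate it to a parenthetical; it is complete modulo the closedness fact, which the paper likewise cites without proof.

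Your primary route, by contrast, has two genuine gaps. First, ``we may moreover assume $X$ bounded'' is not available: Lemma~\ref{lem:8} is stated for a \emph{bounded} martingale, whereas the present lemma only assumes $X$ square integrable, and a square integrable martingale need not be even locally bounded (its jumps can be unbounded); replacing $X$ by a bounded martingale with the MRP changes the integrands and hence the statement being proved. Second, and more seriously, the ``dominated convergence theorem for stochastic integrals'' requires the dominating process to be $X$-integrable, and your $G=\abs{\gamma^1}+3\sqrt{C}$ is only shown to be $\mu^{X}$-a.s.\ \emph{finite}. That is not enough: for minimal integrands the pointwise size of $\gamma^n$ is controlled from $\abs{\kappa^{X}\gamma^n}$ only through the factor $\abs{{\kappa^{X}}^{\oplus}}$ in~\eqref{eq:4}, which blows up where $\kappa^{X}$ degenerates, so there is no reason for $\int G^2\,dA^X$ to be locally integrable, and the DCT does not apply as stated. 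The alternative you offer --- a Fatou estimate on $\qvar{(\gamma^n-\gamma)\cdot X^{\tau_m}}$ --- is circular, since writing $(\gamma^n-\gamma)\cdot X$ presupposes exactly the $X$-integrability of $\gamma$ that is at stake. The quantity your geometric extraction actually controls is $\sum_n\EP{\qvar{\delta^n\cdot X}_{\tau_m}^{1/2}}$, i.e.\ the $L^2(dA^X)$-norms of $\kappa^{X}\delta^n$, and one can close the argument by showing that $\sum_n\kappa^{X}\delta^n$ converges in $L^2(dA^X(\omega))$ for a.e.\ $\omega$, whence $\int_0^{\tau_m}\abs{\kappa^{X}(\gamma-\gamma^1)}^2\,dA^X<\infty$ a.s.\ and $\gamma$ is $X$-integrable; but as written the domination step does not go through, and the closedness-based route is both shorter and actually correct.
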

\begin{proof}
  In view of characterization~\eqref{eq:7} for minimal integrands, the
  minimality of every element of $(\gamma^n)$ implies the minimality
  of $\gamma$ provided that the latter is $X$-integrable. Thus we only
  need to show that $\gamma$ is $X$-integrable and $M=\gamma\cdot X$.

  By Lemma~\ref{lem:7}, passing to subsequences, we can assume that
  $M^n = \gamma^n\cdot X\to M$ in $\mathcal{H}_{1,loc}$. Since the
  space of stochastic integrals is closed under the convergence in
  $\mathcal{H}_{1,loc}$, there is a $X$-integrable predictable process
  $\widetilde{\gamma}$ such that $M = \widetilde \gamma \cdot X$.
  From Lemma~\ref{lem:3} we deduce that
  \begin{displaymath}
    \kappa^X (\gamma^n - \widetilde \gamma)
    \overset{\mu^{X}}{\rightarrow} 0. 
  \end{displaymath}
  It follows that
  \begin{displaymath}
    \kappa^X(\widetilde \gamma - \gamma) = 0, \quad \mu^X-a.s.,
  \end{displaymath}
  and Lemma~\ref{lem:2} yields the result.
\end{proof}

\begin{proof}[Proof of Theorem~\ref{th:3}.]
  It is sufficient to prove the existence of the required analytic
  versions only locally, in a neighborhood of every $y\in U$.
  Hereafter, we fix $y\in U$.  There are
  $\epsilon = \epsilon(y) \in (0,1)$ and a family
  $(\zeta_\alpha =\zeta_\alpha(y))_{\alpha \in \mathbf{Z}^l_+}$ in
  $\mathcal{L}_1$ such that
  \begin{gather*}
    \xi(x) = \xi(y) + \sum_\alpha \zeta_\alpha (x-y)^\alpha, \quad
    \max_i\abs{x_i-y_i}<2\epsilon, \\
    \sum_{\alpha} \EP{\abs{\zeta_\alpha}}(2\epsilon)^{\abs{\alpha}} <
    \infty,
  \end{gather*}
  where the first series converges in $\mathcal{L}_1$.

  By taking conditional expectations with respect to $\mathcal{F}_t$
  we obtain that
  \begin{equation}
    \label{eq:10}
    M_t(x) = M_t(y) + \sum_{\alpha} L^{\alpha}_{t} (x-y)^\alpha, \quad
    \max_i\abs{x_i-y_i} < 2\epsilon, 
  \end{equation}
  where $L^{\alpha}_{t} \set \cEP{t}{\zeta_\alpha}$ and the series
  converges in $\mathcal{L}_1$. Lemma~\ref{lem:6} yields an increasing
  sequence $(\tau_m)$ of stopping times such that
  $\braces{\tau_m=\infty}\uparrow \Omega$ and
  \begin{displaymath}
    \sum_{\alpha}\sNorm{L^{\alpha,\tau_m}}{\mathcal{H}_1} \epsilon^{\abs{\alpha}}<
    \infty, \quad m\geq 1. 
  \end{displaymath} 
  It follows that
  \begin{displaymath}
    \sum_\alpha \sup_{t\geq 0} \abs{L^{\alpha}_{t}(\omega)} \epsilon^{\abs{\alpha}} <
    \infty, \quad \mathbb{P}-a.s.
  \end{displaymath}
  and we can modify the martingales $(L^\alpha)$ so that the above
  convergence holds true for every $\omega\in \Omega$. Then the series
  in~\eqref{eq:10} converges {uniformly} in $t$ for every
  $\omega\in \Omega$ and every $x$ such that
  $\max_i \abs{x_i - y_i}<\epsilon$. Thus, it defines the
  modifications of $M(x)$ for such $x$ with the required analytic
  properties.

  For the second part of the theorem we observe that the statement is
  invariant with respect to the choice of the local martingale $X$
  that has the MRP. Proposition~2 in~\cite{KramSirb:06} shows that we
  can choose $X$ to be a bounded $m$-dimensional martingale.

  As $X$ has the MRP, there are minimal integrands $\sigma(y)$ and
  $(\gamma^\alpha)$ such that
  \begin{align*}
    M(y) & = M_0(y) + \sigma(y)\cdot X, \\
    L^\alpha & = L^{\alpha}_{0} + \gamma^\alpha \cdot X, \quad \alpha
               \in \mathbf{Z}^l_+.
  \end{align*}
  From~Lemma~\ref{lem:8} we deduce that
  \begin{displaymath}
    \sum_\alpha \abs{\gamma^{\alpha}_t(\omega)}^2
    \epsilon^{2\abs{\alpha}}   < \infty  
  \end{displaymath}
  for all $(t,\omega)$ except a predictable set of $\mu^X$-measure
  $0$. By Lemma~\ref{lem:2} we can set $\gamma^\alpha=0$ on this set
  without changing $\gamma^\alpha\cdot X$. Then the series converges
  for every $(t,\omega)$. As $\epsilon \in (0,1)$, we deduce that
  \begin{displaymath}
    \sum_\alpha 
    \abs{\gamma^{\alpha}_t(\omega)} \epsilon^{2\abs{\alpha}} < \infty 
  \end{displaymath}
  and thus for $x=(x_1,\dots,x_l)$ such that
  $\max_i \abs{x_i - y_i}<\epsilon^2$ and every $(t,\omega)$ we can
  define
  \begin{displaymath}
    \sigma_t(x)(\omega) \set \sigma_t(y)(\omega) + \sum_\alpha
    \gamma^\alpha_t(\omega) (x-y)^\alpha. 
  \end{displaymath}
  By construction, the function $x\to \sigma_t(x)(\omega)$ is analytic
  in a neighborhood of $y$.  By Lemma~\ref{lem:9}, for every $x$ such
  that $\max_i \abs{x_i-y_i}<\epsilon^2$ the predictable process
  $\sigma(x)$ is an integrand for $X$ and
  \begin{align*}
    M(x) &= M(y) + \sum_\alpha L^\alpha (x-y)^\alpha \\
         &= M_0(x) + \sigma(y)\cdot X + \sum_\alpha (\gamma^\alpha\cdot X)
           (x-y)^\alpha \\
         &= M_0(x) + \sigma(x)\cdot X.
  \end{align*}
\end{proof}

\section{The MRP under the change of measure}
\label{sec:mrp-under-change}

Let $X$ be a $d$-dimensional local martingale and $Z>0$ be the density
process of $\widetilde{\mathbb{P}} \sim \mathbb{P}$. We denote by
$\widetilde Z \set 1/{Z}$ the density process of $\mathbb{P}$ under
$\widetilde{\mathbb{P}}$ and set $L \set \widetilde{Z}_{-}\cdot Z$ and
$\widetilde{L} \set {Z}_{-}\cdot \widetilde{Z}$. Using integration by
parts we deduce that
\begin{displaymath}
  d(\widetilde Z X) = X_{-}d\widetilde{Z} + \widetilde{Z}_{-}
  d\widetilde{X}, 
\end{displaymath}
where
\begin{displaymath}
  \widetilde X = X +
  \qcov{X}{\widetilde L}. 
\end{displaymath}
It follows that $\widetilde X$ is a $d$-dimensional local martingale
under $\widetilde{\mathbb{P}}$. Of course, this is just a version of
Girsanov's theorem.

We observe that the relations between $X$ and $\widetilde X$ are
symmetric in the sense that
\begin{displaymath}
  X = \widetilde X + \qcov{\widetilde X}{L}.
\end{displaymath}
Indeed, as we have already shown,
$Y \set \widetilde X + \qcov{\widetilde X}{L}$ is a $d$-dimensional
local martingale. Clearly, the local martingales $X$ and $Y$ have the
same initial values and the same continuous martingale parts. Finally,
they have identical jumps:
\begin{align*}
  \Delta(Y-X) &= \Delta (\qcov{X}{\widetilde L} + \qcov{\widetilde
                X}{L}) = \Delta X(\Delta\widetilde{L} + \Delta L + \Delta
                \widetilde{L} \Delta L) \\
              &= \Delta X \Delta(Z\widetilde Z) = 0.
\end{align*}

\begin{Theorem}
  \label{th:4}
  The local martingale $X$ has the MRP if and only if the local
  martingale $\widetilde X$ under $\widetilde{\mathbb{P}}$ has the
  MRP.
\end{Theorem}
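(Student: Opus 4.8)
The plan is to deduce the equivalence from the single implication ``if $X$ has the MRP under $\mathbb{P}$ then $\widetilde X$ has the MRP under $\widetilde{\mathbb{P}}$'', and then to get the converse for free. The reduction is legitimate by the symmetry of the construction preceding the theorem: the assignment $V\mapsto V+[V,\widetilde L]$ carries $\mathbb{P}$-local martingales to $\widetilde{\mathbb{P}}$-local martingales, the assignment $W\mapsto W+[W,L]$ carries $\widetilde{\mathbb{P}}$-local martingales to $\mathbb{P}$-local martingales, and — by the same reasoning used above to check that $X=\widetilde X+[\widetilde X,L]$, namely that each composite differs from the identity by a local martingale with vanishing continuous martingale part and vanishing jumps — these two assignments are mutually inverse bijections. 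Hence it suffices to prove the first implication; applying it with the roles of $(\mathbb{P},X,L)$ and $(\widetilde{\mathbb{P}},\widetilde X,\widetilde L)$ interchanged (in that picture the Girsanov transform of $\widetilde X$ is exactly $X$) yields the other direction.

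To prove the first implication I would proceed in three short steps. Let $\widetilde N$ be an arbitrary local martingale under $\widetilde{\mathbb{P}}$. First, $N\set \widetilde N+[\widetilde N,L]$ is a local martingale under $\mathbb{P}$, with $N_0=\widetilde N_0$, by the symmetric Girsanov relation just recalled. Second, since $X$ has the MRP, there is a predictable $X$-integrable process $\gamma$ such that $N=N_0+\gamma\cdot X$. Third, I transport this representation back to $\widetilde{\mathbb{P}}$, using that covariation with a constant vanishes and that $[\gamma\cdot X,\widetilde L]=\gamma\cdot[X,\widetilde L]$:
\begin{displaymath}
  \widetilde N=N+[N,\widetilde L]=\widetilde N_0+\gamma\cdot X+\gamma\cdot[X,\widetilde L]=\widetilde N_0+\gamma\cdot\bigl(X+[X,\widetilde L]\bigr)=\widetilde N_0+\gamma\cdot\widetilde X,
\end{displaymath}
so $\widetilde N$ is a stochastic integral with respect to $\widetilde X$, and $\widetilde X$ has the MRP.

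The step I expect to be the main obstacle is the last displayed chain of equalities: one must check that the $X$-integrable process $\gamma$ is also $\widetilde X$-integrable and that the $\widetilde{\mathbb{P}}$-stochastic integral $\gamma\cdot\widetilde X$ really equals $\gamma\cdot X+[\gamma\cdot X,\widetilde L]$. This is precisely the statement that Girsanov's change of measure commutes with vector stochastic integration. It holds because the class of $X$-integrable processes and the value of the integral are unchanged when $\mathbb{P}$ is replaced by the equivalent measure $\widetilde{\mathbb{P}}$, and because $[\gamma\cdot X,\widetilde L]=\gamma\cdot[X,\widetilde L]$ is valid for every $X$-integrable $\gamma$; consequently $\gamma\cdot X+[\gamma\cdot X,\widetilde L]$ is a well-defined semimartingale which, being the Girsanov transform of the $\mathbb{P}$-local martingale $\gamma\cdot X$, coincides with $\gamma\cdot\widetilde X$. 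Everything else is already contained, essentially verbatim, in the computations made just before the statement of the theorem: that $N$ is a bona fide local martingale under $\mathbb{P}$ and that the two Girsanov maps invert one another. (One could instead argue via Jacod's uniqueness criterion from Remark~\ref{rem:1}, but that would seem to require tracking how the drift of $\widetilde X$ depends on the equivalent martingale measure, which is less transparent than the direct computation above.)
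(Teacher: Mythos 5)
Your proposal is correct and follows essentially the same route as the paper: reduce to one implication by the symmetry of the two Girsanov maps, pull the $\widetilde{\mathbb{P}}$-local martingale back to a $\mathbb{P}$-local martingale, represent it as $\gamma\cdot X$, and push the representation forward using $\gamma\cdot X+\qcov{\gamma\cdot X}{\widetilde L}=\gamma\cdot\widetilde X$. You are somewhat more explicit than the paper about the integrability of $\gamma$ with respect to $\widetilde X$ and the identity $\qcov{\gamma\cdot X}{\widetilde L}=\gamma\cdot\qcov{X}{\widetilde L}$, which the paper leaves implicit; otherwise the arguments coincide.
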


\begin{proof}
  By symmetry, it is sufficient to prove only one of the
  implications. We assume that $X$ has the MRP. Let $\widetilde M$ be
  a local martingale under $\widetilde{\mathbb{P}}$. The arguments
  before the statement of the theorem yield the unique local
  martingale $M$ such that
  \begin{displaymath}
    \widetilde M =  M + \qcov{M}{\widetilde L}. 
  \end{displaymath}
  If now $H$ is an integrand for $X$ such that $M = M_0 + H\cdot X$,
  then
  \begin{displaymath}
    \widetilde M = \widetilde M_0 + H\cdot (X + \qcov{X}{\widetilde
      L}) = \widetilde M_0 + H\cdot \widetilde X. 
  \end{displaymath}
\end{proof}
 
\bibliographystyle{plainnat}

\bibliography{finance.bib}

\end{document}